\numberwithin{equation}{section} 
\theoremstyle{plain}
\newtheorem{theo+}           {Theorem}      [section]
\newtheorem{prop+}  [theo+]  {Proposition}
\newtheorem{coro+}  [theo+]  {Corollary}
\newtheorem{lemm+}  [theo+]  {Lemma}
\newtheorem{defi+}  [theo+]  {Definition}
\newtheorem{conj+}  [theo+]  {Conjecture}
\theoremstyle{definition}
\newtheorem{rema+}  [theo+]  {Remark}
\newtheorem{prob+}  [theo+]  {Problem}
\newtheorem{exam+}  [theo+]  {Example}
\newenvironment{proposition}{\begin{prop+}}{\end{prop+}}
\newenvironment{corollary}{\begin{coro+}}{\end{coro+}}
\newenvironment{lemma}{\begin{lemm+}}{\end{lemm+}}
\newcommand{\ti}{\mathrm i}
\begin{document}

\baselineskip 18pt
\larger[2]
\title
[Rahman's biorthogonal functions and superconformal indices] 
{Rahman's biorthogonal rational functions\\ and superconformal indices}
\author{Hjalmar Rosengren}
\address
{Department of Mathematical Sciences
\\ Chalmers University of Technology and University of Gothenburg\\SE-412~96 G\"oteborg, Sweden}
\email{hjalmar@chalmers.se}
\urladdr{http://www.math.chalmers.se/{\textasciitilde}hjalmar}

\thanks{Supported by the Swedish Science Research
Council (Vetenskapsr\aa det). }

\dedicatory{\Large Dedicated to the memory of Mizan Rahman}

\begin{abstract}
We study biorthogonal functions related to basic hypergeometric integrals
with coupled continuous and discrete components. Such integrals appear as superconformal indices for three-dimensional quantum field theories and also in the context of solvable lattice models. We obtain explicit biorthogonal systems given by products of two of Rahman's biorthogonal rational ${}_{10}W_9$-functions or their degenerate cases. We also give new bilateral extensions of the Jackson and $q$-Saalsch\"utz summation formulas and new continuous and discrete biorthogonality measures for Rahman's functions.
\end{abstract}

\maketitle

\section{Introduction}  

During its long history, the notion of
\emph{classical orthogonal polynomials} has gradually been expanded
 to include more general systems. The  most restrictive definition includes only Jacobi, Laguerre and Hermite polynomials (with special cases such as Chebyshev and Legendre polynomials). By Sonine's theorem, these are precisely the orthogonal polynomials whose derivatives are again orthogonal. After the work of Hahn and others in the early 20th century, it appeared natural to include also systems related to difference rather than differential equations. This development culminated in the Askey scheme of hypergeometric and basic hypergeometric orthogonal polynomials  \cite{ks}, consisting of Askey--Wilson and $q$-Racah polynomials together with numerous degenerate cases.

Jacobi polynomials are orthogonal with respect to the measure in Euler's beta integral
$$\int_{-1}^1(1-x)^{a-1}(1+x)^{b-1}\,dx=2^{a+b-1}\frac{\Gamma(a)\Gamma(b)}{\Gamma(a+b)}. $$
More generally,  the Askey scheme is related to a scheme  of 
integral evaluations and summations, many of which are also called beta integrals. From this perspective, the Askey scheme appears to be incomplete. For instance, the  $q$-Racah polynomials correspond to Rogers' ${}_6W_5$-summation 
\cite[Eq.\ (II.21)]{gr}, which is  a degenerate case of Jackson's ${}_8W_7$-summation  \cite[Eq.\ (II.22)]{gr}. Thus,  the top level in the Askey scheme corresponds to the next-to-top level in the scheme of beta integrals.

Mizan Rahman and Jim Wilson realized that to find 
the missing level in the Askey scheme it is necessary to extend its scope from orthogonal polynomials to 
 \emph{biorthogonal rational functions}. To our knowledge, the earliest occurrence of hypergeometric biorthogonal 
rational functions is in \cite{ra0}, dealing with functions of type ${}_3F_2$ and ${}_4F_3$.
At the end of the paper, Rahman remarks that Wilson  independently had found more general discrete and continuous systems.
This seems to refer to the missing level of the classical part of the Askey scheme, situated above Wilson and Racah polynomials. 
In \cite{r1}, Rahman gave a system of biorthogonal rational functions of type ${}_{10}W_9$, generalizing Askey--Wilson polynomials. 
Again, he refers to at the time unpublished work of Wilson, 
who found the corresponding discrete system, generalizing $q$-Racah polynomials \cite{w}.
The complete extension of the Askey scheme to biorthogonal rational functions
has  been worked out  (in the $q$-case) by van de Bult and Rains \cite{br}.

The work of Rahman and Wilson indicates that ${}_{10}W_9$ is the most general ``classical" hypergeometric function. It must have been a great surprise to the special functions community when even more general functions turned up in the work of Date et al.\ \cite{dj} on solvable lattice models. Their elliptic $6j$-symbols  generalized  ${}_{10}W_9$-sums in a completely unexpected direction, providing the first known example of \emph{elliptic hypergeometric functions}; see \cite{re} for an introduction.  

It took some time before the study of elliptic hypergeometric functions gained momentum. 
In one of the first papers on the subject, Spiridonov and Zhedanov \cite{sz} constructed a system of biorthogonal rational functions generalizing Wilson's discrete ${}_{10}W_9$-functions.

A first step towards  elliptic extensions of Askey--Wilson polynomials was
Spiridonov's discovery of the elliptic beta integral \cite{sp1}
\begin{equation}\label{sb}\oint\frac{\prod_{j=1}^6\Gamma(t_jz;p,q)\Gamma(t_j/z;p,q)}{\Gamma(z^2;p,q)\Gamma(z^{-2};p,q)}\frac{dz}{2\pi\ti z}=\frac{2}{(p;p)_\infty(q;q)_\infty}\prod_{1\leq i<j\leq 6}\Gamma(t_it_j;p,q). \end{equation}
Here, the integration is over a contour separating geometric sequences of poles going to infinity from sequences going to zero.
The parameters satisfy the balancing condition $t_1\dotsm t_6=pq$,  the elliptic gamma function is defined by \cite{ru}
$$\Gamma(z;p,q)=\prod_{j,k=0}^\infty\frac{1-p^{j+1}q^{k+1}/z}{1-p^jq^kz} $$
and $(p;p)_\infty=\prod_{j=0}^\infty(1-p^{j+1})$.

In \cite{sp2}, Spiridonov introduced systems biorthogonal with respect to the measure in \eqref{sb}.
As a direct extension of Rahman's work, he  found explicit
   functions $Q_k$ and $R_k$ satisfying a biorthogonality relation $\mu(Q_kR_l)=C_k\delta_{kl}$, where 
$$\mu(f)=\frac{(p;p)_\infty(q;q)_\infty}{2\prod_{1\leq i<j\leq 6}\Gamma(t_it_j;p,q)} \oint f(z)\frac{\prod_{j=1}^6\Gamma(t_jz;p,q)\Gamma(t_j/z;p,q)}{\Gamma(z^2;p,q)\Gamma(z^{-2};p,q)}\frac{dz}{2\pi\ti z}.$$
These functions are elliptic (in a multiplicative coordinate, namely, $Q_k(pz)=Q_k(z)$ and $R_k(pz)=R_k(z)$) and invariant under $z\mapsto z^{-1}$. It follows that they are rational after a change of variables. More generally, Spiridonov found that if 
$\tilde Q_k$ and $\tilde R_k$ denote the functions obtained from $Q_k$ and $R_k$ after interchanging $p$ and $q$, then
\begin{equation}\label{dib}\mu\left(Q_{k_1}\tilde Q_{k_2}R_{l_1}\tilde R_{l_2}\right)=C_{k_1}\tilde C_{k_2}\delta_{k_1l_1}\delta_{k_2l_2}. \end{equation}
This  two-index biorthogonality for a one-variable integral is  quite unusual. It is
not a biorthogonality of rational functions as the functions involved are rational in two different variables.

An intriguing consequence of \eqref{dib} is a property that we will call the \emph{decoupling phenomenon}. We can write \eqref{dib} as
\begin{equation}\label{bm}\mu(fg)=\mu(f)\mu(g),\end{equation}
where  $f=Q_{k_1}R_{l_1}$ and $g=\tilde Q_{k_2}\tilde R_{l_2}$. Taking linear combinations of these basis elements, it follows that \eqref{bm} holds for any functions
$f$ and $g$ satisfying appropriate conditions. 
In particular, $f$ and $g$ should be elliptic with distinct periods ($f(pz)=f(z)$ and $g(qz)=g(z)$) and invariant under $z\mapsto z^{-1}$.  There are also restrictions on the location of their poles.
In the present work we will investigate other instances of the decoupling phenomenon.

Dolan and Osborn \cite{do} discovered that \eqref{sb} (and more general integral identities of Rains \cite{ra}) 
can be interpreted in terms of supersymmetric quantum field theories. 
 It is 
 believed that if two  theories are dual in the sense of Seiberg \cite{se}, then their so called superconformal indices \cite{km, rom} agree. For a particular pair of dual theories, this gives \eqref{sb}. 
Exploiting this idea has led to long lists of elliptic hypergeometric identities, most of which still lack rigorous proof, see e.g.\ \cite{sv1,sv2}.

The quantum field theories related to elliptic hypergeometric integrals live on four-dimensional space-times. In a similar way, one can obtain basic hypergeometric integrals from
three-dimensional theories \cite{gr1,ga,hw,i,kw,kvs,y}. Interestingly, the resulting integrals are not of a type considered in the classical literature but  involve a mixture of continuous and discrete integration. (This can also happen for four-dimensional theories \cite{k,sp3}, but then with a finite rather than infinite discrete component.)
As an example,
a top level  integral of this type is \cite{ga}
\begin{multline}\label{gi}\sum_{x=-\infty}^\infty\oint \frac{(1-q^x z^2)(1-q^xz^{-2})}{q^xz^{6x}}\prod_{j=1}^6\frac{(q^{1+x/2}/b_jz,q^{1-x/2}z/b_j)_\infty}{(q^{N_j+x/2}b_jz,q^{N_j-x/2}b_j/z)_\infty} \frac{dz}{2\pi\ti z}\\
=\frac 2{\prod_{j=1}^6q^{\binom{N_j}2} b_j^{N_j}}\prod_{1\leq i<j\leq 6}
\frac{(q/b_ib_j)_\infty}{(b_ib_jq^{N_i+N_j})_\infty},
\end{multline}
valid for generic parameters  $b_j$ and integer parameters $N_j$ subject to 
 \begin{equation}\label{b}b_1\dotsm b_6=q\end{equation}
 and $N_1+\dots+N_6=0$. 
 The identity \eqref{gi} and some related results can also be interpreted as star-triangle relations for solvable lattice models \cite{gk,gs,k1,k,ya}.
 
The main purpose of the present work is to investigate the ``classical orthogonal polynomials" corresponding to the integral \eqref{gi} and another integral from \cite{ga} (see \eqref{sp} below). It turns out that there is again a decoupling phenomenon such as \eqref{bm}, but with $f$ a rational function of $zq^{-x/2}$ and $g$ a rational function of $zq^{x/2}$ (subject to certain restrictions). 
We can then obtain two-index biorthogonal functions, which in the case of \eqref{gi} are products of Rahman's ${}_{10}W_9$-functions. In the case of \eqref{sp}, there are biorthogonalities 
involving degenerate cases 
  of Rahman's functions due to Al-Salam and Ismail \cite{ai} and van de Bult and Rains \cite{br}.
An intriguing question is whether the decoupling phenomenon and the two-index biorthogonality have any relevance for physics,  in the context of either quantum field theory or solvable lattice models.

The plan of our paper is as follows. In \S \ref{bss} we give two basic hypergeometric summation formulas,
which provide bilateral extensions of the Jackson and $q$-Saalsch\"utz summation. Although these follow easily from known results, they seem not to have been stated explicitly before.
In \S \ref{brs} we discuss Rahman's biorthogonal rational functions. We obtain some new results, in particular, new continuous and discrete biorthogonality measures. Finally, in \S \ref{sis} we  study the decoupling phenomenon and  related two-index biorthogonal systems for integrals arising as superconformal indices.

Throughout, $q$ will be a fixed complex number with $0<|q|<1$. We also fix a choice of square root $q^{1/2}$.
We follow the standard notation of \cite{gr}, but suppress $q$ from the notation. Thus, we write $q$-shifted factorials as
$$(a)_k=\begin{cases}(1-a)(1-aq)\dotsm(1-aq^{k-1}), & k\in\mathbb Z_{\geq 0},\\
\left((1-aq^{-1})(1-aq^{-2})\dotsm(1-aq^k)\right)^{-1}, & k\in\mathbb Z_{<0}\end{cases} $$
and
$$(a)_\infty=\prod_{j=0}^\infty(1-aq^j). $$
We will use the theta function
$$\theta(x)=(x)_\infty(q/x)_\infty, $$
which satisfies the identity
\begin{equation}\label{tqp}\theta(xq^k)=(-1)^kq^{-\binom{k}2}x^{-k}\theta(x),\qquad k\in\mathbb Z. \end{equation}
Repeated arguments of shifted factorials and theta functions stand for products, for instance,
$$(ab^\pm,c)_k=(ab)_k(a/b)_k(c)_k. $$
Finally, we recall the standard basic hypergeometric series
\begin{align*}
{}_{r+1}\phi_r\left(\begin{matrix}a_1,\dots,a_{r+1}\\b_1,\dots,b_r\end{matrix};z\right)
&=\sum_{k=0}^\infty\frac{(a_1,\dots,a_{r+1})_k}{(q,b_1,\dots,b_r)_k}\,z^k\\
{}_{r}\psi_r\left(\begin{matrix}a_1,\dots,a_{r}\\b_1,\dots,b_r\end{matrix};z\right)
&=\sum_{k=-\infty}^\infty\frac{(a_1,\dots,a_{r})_k}{(b_1,\dots,b_r)_k}\,z^k,\\
{}_{r+1}W_{r}(a;b_1,\dots,b_{r-2};z)&=\sum_{k=0}^\infty\frac{1-aq^{2k}}{1-a}\frac{(a,b_1,\dots,b_{r-2})_k}{(q,aq/b_1,\dots,aq/b_{r-2})_k}\,z^k.
\end{align*}

 {\bf Acknowledgements:} 
This work is dedicated to the memory of Mizan Rahman, 
a gentle and generous mathematician 
whose unsurpassed mastery of $q$-series has been a great inspiration for me.
I thank Ilmar Gahramanov for fruitful discussions on superconformal indices and Slava Spiridonov for encouraging me to finish the present paper.
  
\section{Bilateral summations}\label{bss}

\subsection{A bilateral Jackson summation}

Although it is a straight-forward consequence of results that have been known since the 1950s, the following identity seems to fill a gap in the literature.

\begin{proposition}\label{bjp} If $\lambda,\mu,b_1,\dots,b_6$ are generic parameters 
subject to \eqref{b}, 
then
\begin{multline}\label{bj}
\left\{\theta(\mu b_1,\dots,\mu b_6)-\frac{q}{\mu^2}\,\theta(\mu/b_1,\dots,\mu/b_6)\right\}\\
\times\frac{(1-\lambda^2)\prod_{j=1}^6(q\lambda^\pm/b_j)_\infty}{\theta(\mu/\lambda)}
\,{}_8\psi_8\left(\begin{matrix}\lambda q,-\lambda q,\lambda b_1,\dots,\lambda b_6\\
\lambda,-\lambda,\lambda q/b_1,\dots,\lambda q/b_6\end{matrix};q\right)+\text{\rm idem}(\lambda;\mu)\\
=(q)_\infty\theta(\lambda^2,\mu^2,\lambda\mu)\prod_{1\leq i<j\leq 6}(q/b_ib_j)_\infty.
\end{multline}
\end{proposition}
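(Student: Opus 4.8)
The plan is to read \eqref{bj} as the bilateral, ${}_8\psi_8$-level analogue of Bailey's very-well-poised ${}_6\psi_6$ summation and to deduce it from Jackson's terminating ${}_8W_7$ sum by a termination-and-continuation argument. First I would observe that the series is a very-well-poised ${}_8\psi_8$ with base $\lambda^2$ and free parameters $\lambda b_1,\dots,\lambda b_6$, and that the balancing condition \eqref{b} is exactly what puts its argument at the value $q$ natural for such a series, guaranteeing convergence for $0<|q|<1$. Since a single very-well-poised bilateral series admits a one-product evaluation only up to the ${}_6\psi_6$ level, neither ${}_8\psi_8$ can sum on its own; the theta prefactors and the symmetrising $\mathrm{idem}(\lambda;\mu)$ term are present precisely to cancel the surplus, and it is their combination that I would evaluate.

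Concretely, I would specialise the parameters so that the ${}_8\psi_8$ in $\lambda$ terminates, for instance $\lambda b_6=q^{-n}$ together with $\lambda/b_5=q^{m}$ (adjusting $b_1b_2b_3b_4=q^{1+m+n}$ to respect \eqref{b}); then the $\lambda$-series collapses to a finite very-well-poised sum over $-m\le k\le n$. The key point is that at such a point the coefficient of the $\mathrm{idem}$ term vanishes: its bracket $\theta(\lambda b_1,\dots,\lambda b_6)-q\lambda^{-2}\theta(\lambda/b_1,\dots,\lambda/b_6)$ has the factor $\theta(\lambda b_6)=\theta(q^{-n})=0$ in its first product and $\theta(\lambda/b_5)=\theta(q^{m})=0$ in its second. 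Hence the whole second term drops out, and after shifting the summation index to start at $k=0$ the remaining finite sum is evaluated by Jackson's ${}_8W_7$ summation.

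It then remains to check that Jackson's value, multiplied by the explicit prefactor, reproduces the right-hand side; here I would argue on the $\mu$-dependence. Writing $P(\mu)=\theta(\mu b_1,\dots,\mu b_6)-q\mu^{-2}\theta(\mu/b_1,\dots,\mu/b_6)$, a short computation with \eqref{tqp} shows that $P(\mu)/\theta(\mu/\lambda)$ and the right-hand factor $\theta(\mu^2,\lambda\mu)$ transform with the same multiplier under $\mu\mapsto q\mu$, so it suffices to match their zeros and one value. The zeros of $\theta(\mu^2)$ are accounted for by $P(\pm1)=0$, which is exactly where the balancing \eqref{b} enters, and by $P(\pm q^{1/2})=0$, which follows from $\theta(q/x)=\theta(x)$; matching the remaining zero and normalising against Jackson's product fixes the constant. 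Having verified \eqref{bj} on the doubly infinite family of specialisation points, I would extend it to generic $\lambda,\mu,b_j$ by analytic continuation, the two sides being holomorphic once the apparent poles of the individual terms at $\mu=\lambda q^{k}$ are seen to cancel between a term and its $\mathrm{idem}$ partner.

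I expect the main obstacle to be bookkeeping rather than concept: matching Jackson's product to the theta prefactor requires careful use of $\theta(x)=(x)_\infty(q/x)_\infty$, the reflection $\theta(1/x)=-x^{-1}\theta(x)$ and \eqref{tqp}, while the continuation needs the usual growth control to exclude a spurious elliptic factor. As a cross-check, and a possible alternative route, I would bear in mind that \eqref{bj} is the trigonometric degeneration of a residue expansion of a very-well-poised beta integral, so that the two-term, theta-coefficient shape of the left-hand side should also arise from summing residues over two geometric ladders of poles.
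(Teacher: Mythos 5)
Your core observations are correct, and clever: at $\lambda b_6=q^{-n}$, $\lambda/b_5=q^{m}$ ($m,n\geq 0$) the bracket multiplying the $\mu$-series vanishes (its two products contain $\theta(q^{-n})$ and $\theta(q^{m})$ respectively), the $\lambda$-series truncates to $-m\leq k\leq n$ and is then summable by Jackson's ${}_8W_7$ sum, and your matching of the $\mu$-dependence is sound --- it is exactly the degenerate case of the theta identity \eqref{ftt} in which the right-hand side collapses to a single product. The gap is the last step. Your specialisation does not merely fix $\lambda$: it forces $b_5b_6=q^{-m-n}$, i.e.\ it is only available on the non-generic stratum where \eqref{gc} fails, and all the $\lambda$-values it reaches lie in the two cosets $b_5q^{\mathbb Z}$ and (swapping the roles of $b_5,b_6$) $b_6q^{\mathbb Z}$. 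One can in fact complete the $\lambda$-direction by an elliptic argument: both sides of \eqref{bj} satisfy $F(q\lambda)=-F(\lambda)/q\lambda^5\mu$, the poles at $\lambda\in\mu q^{\mathbb Z}$ cancel between a term and its idem partner, and the zeros at $\lambda=\pm1,\pm q^{1/2}$ together with your two cosets give six zeros against a degree-five multiplier, forcing equality for all $\lambda,\mu$. But this still only proves \eqref{bj} when $b_5b_6\in q^{-\mathbb Z_{\geq 0}}$; the continuation \emph{off that stratum}, in the $b_j$, is what your sketch waves at and never supplies.

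That continuation is not ``bookkeeping''. In any single complex parameter the verified points accumulate only at $0$ or $\infty$ (e.g.\ $b_6\in b_5^{-1}q^{-\mathbb Z_{\geq 0}}\to\infty$), and both sides of \eqref{bj} have essential singularities there: factors such as $\theta(\mu b_6)$, and the ${}_8\psi_8$ itself through its terms $(\lambda b_6)_kq^k$, have no limit as $b_6\to\infty$, so the identity theorem does not apply. This is precisely where the Askey--Ismail continuation proof of Bailey's ${}_6\psi_6$ works and yours does not: there the free parameter enters the \emph{argument} of the series, so both sides are analytic at the accumulation point, whereas here the balancing \eqref{b} pins the argument to $q$. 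Quasi-periodicity cannot substitute in the $b$-directions either: the ${}_8\psi_8$ is quasi-periodic in $\lambda$ (shift the summation index) but not in any individual $b_j$, so there is no ``spurious elliptic factor'' argument to run. The paper avoids the problem entirely by using only nonterminating inputs: M.~Jackson's ${}_8\psi_8$ transformation \cite[Eq.~(5.5.2)]{gr}, specialised as in \eqref{jt}, is applied to both ${}_8W_7$'s in the nonterminating Jackson summation \eqref{ns}, \cite[Eq.\ (II.25)]{gr}, and the result is reduced to the theta identity \eqref{ftt}. In effect, the nonterminating Jackson sum already encodes the analytic continuation your plan is missing; recovering \eqref{bj} from the terminating sum alone would require an Ismail-type renormalisation and growth estimate for the bilateral series, which is a substantial piece of analysis, not a routine check. (Your closing remark --- deriving \eqref{bj} by summing residues of Rahman's integral over the two ladders $z\in\lambda^{\pm1}q^{\mathbb Z}$ --- is a viable alternative that sidesteps this, but it takes the nonterminating integral \eqref{ri} as input rather than Jackson's terminating sum.)
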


Here, we are using the  notation
$$ f(\lambda,\mu)+\text{idem}(\lambda;\mu)=f(\lambda,\mu)+f(\mu,\lambda).$$
If  we let $\lambda=b_6$ and $\mu=b_1$ in \eqref{bj}, then both ${}_8\psi_8$-series reduce to ${}_8W_7$-series.
The resulting identity can be written
\begin{multline}\label{ns}{}_8W_7(b_6^2;b_1 b_6,\dots,b_5 b_6;q)-\frac{b_1}{b_6}\frac{( qb_6^2,qb_1/b_6)_\infty}{(qb_1^2,qb_6/b_1)_\infty}\prod_{j=2}^5\frac{(qb_1/b_j,b_6b_j)_\infty}{(qb_6/b_j,b_1b_j)_\infty}\\
\times{}_8W_7(b_1^2;b_1 b_2,\dots,b_1 b_6;q) 
=\frac{(qb_6^2,b_1/b_6)_\infty\prod_{2\leq i<j\leq 5}(q/b_ib_j)_\infty}{\prod_{j=2}^5(qb_6/b_j,b_1b_j)_\infty},
\end{multline}
which is the nonterminating  Jackson summation  \cite[Eq.\ (II.25)]{gr}. Consequently, we call 
\eqref{bj} the \emph{bilateral Jackson summation}. 

To prove Proposition \ref{bjp},  we start from Jackson's   transformation \cite{j} 
(due to Margaret Jackson, not Frank Hilton Jackson of the Jackson summation)
\begin{multline*}
\frac{\prod_{j=1}^6(qa^\pm/b_j)_\infty}{(qa^{\pm 2})_\infty}\,{}_8\psi_8\left(\begin{matrix}aq,-aq,ab_1,\dots,ab_6\\
a,-a,aq/b_1,\dots,aq/b_6\end{matrix};\frac{q^2}{b_1\dotsm b_6}\right)\\
=\frac{\theta(\mu a^\pm)\prod_{j=1}^6(q\lambda^\pm/b_j)_\infty}{\theta(\mu\lambda^\pm)(q\lambda^{\pm 2})_\infty}\,{}_8\psi_8\left(\begin{matrix}\lambda q,-\lambda q,\lambda b_1,\dots,\lambda b_6\\
\lambda,-\lambda,\lambda q/b_1,\dots,\lambda q/b_6\end{matrix};\frac{q^2}{b_1\dotsm b_6}\right)+\text{idem}(\lambda;\mu),
 \end{multline*} 
which is a special case of Slater's transformations for
${}_{2r}\psi_{2r}$-series  \cite[Eq.~(5.5.2)]{gr}. We will only need the case $a=b_6$, that is,
\begin{multline}\label{jt}
\frac{(q)_\infty\prod_{j=1}^5(qb_6^\pm/b_j)_\infty}{(qb_6^2)_\infty}\,{}_8W_7(b_6^2;b_1b_6,\dots,b_5 b_6;q^2/b_1\dotsm b_6)\\
=\frac{\theta(\mu b_6^\pm)\prod_{j=1}^6(q\lambda^\pm/b_j)_\infty}{\theta(\mu\lambda^\pm)(q\lambda^{\pm 2})_\infty}\,{}_8\psi_8\left(\begin{matrix}\lambda q,-\lambda q,\lambda b_1,\dots,\lambda b_6\\
\lambda,-\lambda,\lambda q/b_1,\dots,\lambda q/b_6\end{matrix};\frac{q^2}{b_1\dotsm b_6}\right)\\
+\text{idem}(\lambda;\mu).
 \end{multline} 
Assuming \eqref{b}, we may apply \eqref{jt} to both ${}_8W_7$ series in \eqref{ns}, obtaining after simplification
\begin{multline*}
\left\{\theta(b_1b_2,\dots,b_1b_5,\mu b_6^\pm)-\frac{b_1}{b_6}\,\theta(b_2b_6,\dots,b_5b_6,\mu b_1^\pm)\right\}\\
\times\frac{\prod_{j=1}^6(q\lambda^\pm/b_j)_\infty}{\theta(\mu\lambda^\pm)(q\lambda^{\pm 2})_\infty}
\,{}_8\psi_8\left(\begin{matrix}\lambda q,-\lambda q,\lambda b_1,\dots,\lambda b_6\\
\lambda,-\lambda,\lambda q/b_1,\dots,\lambda q/b_6\end{matrix};q\right)+\text{idem}(\lambda;\mu)\\
=(q)_\infty\theta(b_1/b_6)\prod_{1\leq i<j\leq 6}(q/b_ib_j)_\infty.
\end{multline*}
The proof of \eqref{bj} is now reduced to the  theta function identity
\begin{multline}\label{ftt}
\left\{\theta(b_1b_2,\dots,b_1b_5,\mu b_6^\pm)-\frac{b_1}{b_6}\,\theta(b_2b_6,\dots,b_5b_6,\mu b_1^\pm)\right\}\\
=\frac{\theta(b_1/b_6)}{\theta(\mu^2)}\left\{\theta(\mu b_1,\dots,\mu b_6)-\frac q{\mu^2}\,\theta(\mu/b_1,\dots,\mu/b_6)\right\},
\end{multline}
which is a special case of \cite[Ex.\ 20.53.3]{ww}, see also \cite[Lemma 6.4]{ro}.

\subsection{A bilateral $q$-Saalsch\"utz summation}

The analogue of Proposition \ref{bjp} at the level of the Saalsch\"utz summation combines three rather than two bilateral sums. We find it convenient to write the identity as a determinant evaluation.

\begin{proposition}\label{bsp}
For generic parameters subject to
\begin{equation}\label{aba}a_1a_2a_3b_1b_2b_3=q,\end{equation}
let
\begin{align*}
f_1(\lambda)&=\lambda\,\theta(b_1/\lambda,b_2/\lambda,b_3/\lambda),\\
f_2(\lambda)&=\lambda^{-1}\theta(a_1\lambda,a_2\lambda,a_3\lambda),\\
f_3(\lambda)&=\prod_{j=1}^3(q/a_j\lambda,q\lambda/b_j)_\infty\cdot{}_3\psi_3\left(\begin{matrix}a_1\lambda,a_2\lambda,
a_3\lambda\\q\lambda/b_1,q\lambda/b_2,q\lambda/b_3\end{matrix};q\right).
\end{align*}
Then,
\begin{equation}\label{bsi}\det_{1\leq i,j\leq 3}\left(f_j(\lambda_i)\right)= \frac{(q)_\infty\theta(t\lambda_1\lambda_2\lambda_3)}{\lambda_1\lambda_2\lambda_3}\prod_{i,j=1}^3(q/a_ib_j)_\infty\prod_{1\leq i<j\leq 3}\lambda_j\theta(\lambda_i/\lambda_j),\end{equation}
where  
\begin{equation}\label{t}t=a_1a_2a_3=q/{b_1b_2b_3}.\end{equation}
\end{proposition}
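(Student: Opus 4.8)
The plan is to regard both sides of \eqref{bsi} as functions of the single variable $\lambda_1$, with $\lambda_2,\lambda_3$ and the parameters $a_i,b_j$ held fixed, and to identify them as elements of the same finite-dimensional space of theta functions. Expanding the determinant along its first row shows that the left-hand side is a linear combination of $f_1(\lambda_1),f_2(\lambda_1),f_3(\lambda_1)$ whose coefficients (the cofactors) are independent of $\lambda_1$. The right-hand side, as a function of $\lambda_1$, is a constant multiple of $\lambda_1^{-1}\theta(t\lambda_1\lambda_2\lambda_3)\theta(\lambda_1/\lambda_2)\theta(\lambda_1/\lambda_3)$. I would first record, using \eqref{tqp}, that both of these are \emph{theta functions of degree three}: holomorphic on $\mathbb{C}^\ast$ and satisfying the quasi-periodicity $g(q\lambda_1)=-\bigl(qt\lambda_1^3\bigr)^{-1}g(\lambda_1)$.

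The key point is that all three building blocks $f_1,f_2,f_3$ share this quasi-periodicity. For $f_1$ and $f_2$ this follows at once from \eqref{tqp} together with \eqref{t}, which is exactly what forces the common multiplier. For $f_3$ it is the one genuinely series-theoretic step: shifting the summation index $k\mapsto k+1$ in the bilateral ${}_3\psi_3$ and using $(x)_{k+1}=(1-x)(xq)_k$ shows that the series at $q\lambda$ equals $q^{-1}\prod_{j=1}^3\frac{1-q\lambda/b_j}{1-a_j\lambda}$ times the series at $\lambda$, while the prefactor $\prod_{j=1}^3(q/a_j\lambda,q\lambda/b_j)_\infty$ transforms so as to cancel all the rational factors, leaving precisely the multiplier $-(qt\lambda^3)^{-1}$. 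Here one should also check that this prefactor cancels the poles of the ${}_3\psi_3$, so that $f_3$ is genuinely holomorphic on $\mathbb{C}^\ast$; the convergence of the bilateral series at both ends is itself guaranteed by \eqref{aba}. Consequently both sides of \eqref{bsi}, as functions of $\lambda_1$, lie in the three-dimensional space $V$ of such theta functions.

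To pin down the two sides within $V$ I would compare divisors. The determinant vanishes when $\lambda_1=\lambda_2$ or $\lambda_1=\lambda_3$ (two equal rows), and the right-hand side manifestly vanishes there as well. A nonzero element of $V$ has three zeros in a fundamental annulus whose product is fixed modulo $q^{\mathbb{Z}}$ by the multiplier, so prescribing two of them forces the third; hence the subspace of $V$ vanishing at $\lambda_1\in\{\lambda_2,\lambda_3\}q^{\mathbb{Z}}$ is one-dimensional. Therefore the left-hand side equals the right-hand side times a factor $c$ independent of $\lambda_1$. Since both sides are antisymmetric in $(\lambda_1,\lambda_2,\lambda_3)$—for the right-hand side this uses $\theta(1/x)=-x^{-1}\theta(x)$ applied to $\prod_{1\le i<j\le 3}\lambda_j\theta(\lambda_i/\lambda_j)$—running the same argument in $\lambda_2$ and $\lambda_3$ shows that $c$ is independent of all the $\lambda_i$.

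It remains to evaluate the constant $c$, which I expect to be the main obstacle, since it is the only place where an actual summation is needed. I would specialize $\lambda_1=b_1$: then $f_1(b_1)=b_1\theta(1)\theta(b_1/b_2)\theta(b_1/b_3)=0$ because $\theta(1)=0$, so the determinant collapses to a smaller one, while the lower parameter $q\lambda_1/b_1=q$ truncates the ${}_3\psi_3$ in $f_3(b_1)$ to a one-sided ${}_3\phi_2$. The balancing relation \eqref{aba} makes this ${}_3\phi_2$ Saalsch\"utzian, hence summable by the $q$-Saalsch\"utz summation \cite[Eq.~(II.12)]{gr}; comparing the resulting explicit expression with the right-hand side of \eqref{bsi} at $\lambda_1=b_1$ should yield $c=1$ and complete the proof. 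The remaining risk is bookkeeping: one must verify that the theta and $q$-shifted-factorial prefactors on the two sides match identically as functions of the surviving parameters, which is routine but somewhat lengthy.
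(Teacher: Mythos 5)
Your first two steps are essentially the paper's proof: the paper verifies exactly the same quasi-periodicity $f_j(q\lambda)=-f_j(\lambda)/qt\lambda^3$ (for $f_1,f_2$ via $\theta(qx)=-\theta(x)/x$, for $f_3$ by shifting the summation index) and then concludes that the two sides agree up to a $\lambda$-independent constant, citing \cite[Prop.~3.4]{rs} where you instead give a hands-on divisor argument in $\lambda_1$; your substitute is legitimate and self-contained, so no objection there.

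The genuine gap is in the evaluation of the constant $c$. Specializing only $\lambda_1=b_1$ does \emph{not} collapse the identity to anything computable: expanding along the first row $(0,f_2(b_1),f_3(b_1))$ leaves cofactors containing $f_3(\lambda_2)$ and $f_3(\lambda_3)$, which are still untruncated bilateral ${}_3\psi_3$'s at generic points, so the specialized identity is just the proposition itself with one variable fixed. Moreover, the series that does truncate, $f_3(b_1)$, becomes a \emph{nonterminating} balanced ${}_3\phi_2$ (none of its numerator parameters is $q^{-n}$ for generic parameters), so the terminating $q$-Saalsch\"utz summation \cite[Eq.~(II.12)]{gr} that you invoke does not apply --- a single nonterminating balanced ${}_3\phi_2$ has no closed form. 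The fix is to use the freedom you already established ($c$ independent of \emph{all} $\lambda_i$) and specialize \emph{two} variables at zeros of $f_1$, say $\lambda_2=b_2$ and $\lambda_3=b_3$ (setting all three to $b_1,b_2,b_3$ is no good either, since then both sides vanish because $\theta(tb_1b_2b_3)=\theta(q)=0$). With two variables fixed, the first column of the matrix has two zero entries, the determinant factors as $f_1(\lambda_1)\bigl(f_2(b_2)f_3(b_3)-f_2(b_3)f_3(b_2)\bigr)$, both surviving ${}_3\psi_3$'s truncate, and after cancelling the $\lambda_1$-dependent factors the identity becomes precisely the \emph{nonterminating} $q$-Saalsch\"utz summation \cite[Eq.~(II.24)]{gr} --- a two-term identity, which is exactly the form needed. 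This is what the paper does.
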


One may prove Proposition \ref{bsp} in a similar way as Proposition \ref{bjp}, but we prefer to illustrate a different method 
(which can also be adapted to give an alternative proof of Proposition \ref{bjp}). We first observe that each of the three functions $f_j$  satisfies
$f_j(q\lambda )=-f_j(\lambda)/qt\lambda^3$.
For $f_1$ and $f_2$ this follows from the quasi-periodicity 
$\theta(qx)=-\theta(x)/x$
  and for $f_3$ it is easy to verify by shifting the summation index.
It then follows from \cite[Prop.~3.4]{rs} that \eqref{bsi} holds up to a factor
 independent of the variables $\lambda_j$. Thus, it is enough to verify \eqref{bsi} for fixed values of these variables. We choose
 $\lambda_2=b_2$, $\lambda_3=b_3$, so that $f_1(\lambda_2)=f_1(\lambda_3)=0$. We may then cancel the factors involving $\lambda_1$ and are left with the identity
\begin{multline*}
b_3(a_1b_2,a_2b_2,a_3b_2,b_3q/b_1,b_3q/b_2)_\infty\,{}_3\phi_2\left(\begin{matrix}a_1b_3,a_2b_3,
a_3b_3\\b_3q/b_1,b_3q/b_2\end{matrix};q\right)\\
-\text{idem}(b_2;b_3)=b_3\theta(b_2/b_3)\prod_{j=1}^3(q/b_1a_j)_\infty,
 \end{multline*} 
 which is the nonterminating $q$-Saalsch\"utz summation  \cite[Eq.\ (II.24)]{gr}. This proves \eqref{bsi} and explains why call it the bilateral $q$-Saalsch\"utz summation.

\section{Biorthogonal rational functions}\label{brs}

\subsection{The Rahman functional}\label{rfs}

Let $b_1,\dots,b_6$ be  complex numbers subject to the balancing condition \eqref{b}. We assume that they are generic in the sense that
\begin{equation}\label{gc}b_i/b_j\notin q^{\mathbb Z},\quad 1\leq i< j\leq 6,\qquad b_ib_j\notin q^{\mathbb Z},\quad 1\leq i\leq j\leq 6.
\end{equation} 
Let $V$ denote the vector space of rational functions in the variable $(z+z^{-1})/2$ that are regular except for possible single poles at
\begin{equation}\label{vpp}z^\pm\in b_jq^{\mathbb Z_{<0}},\qquad 1\leq j\leq 6.\end{equation}
 We also require that the elements of $V$ are regular at infinity,
that is, their numerator does not have larger degree than their denominator.

\begin{lemma}\label{sl}
The space  $V$ is spanned by the rational functions
\begin{equation}\label{ss}(b_1z^\pm)_{k_1}\dotsm(b_6z^\pm)_{k_6}, \end{equation}
where $k_j$ are integers such that
\begin{equation}\label{kb}k_1+\dots+k_6=0.\end{equation}
\end{lemma}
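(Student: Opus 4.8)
The plan is to change variables to $x=(z+z^{-1})/2$, describe $V$ completely by partial fractions, and then exhibit the products \eqref{ss} as an explicit spanning set by induction on the depth of the allowed poles. The spanning inclusion $V\subseteq\operatorname{span}\eqref{ss}$ is the real content; the reverse inclusion is a routine factor analysis.

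\emph{Step 1 (partial fractions for $V$).} For $1\le j\le6$ and $m\ge1$ set $x_{j,m}=\tfrac12(b_jq^{-m}+b_j^{-1}q^{m})$, the common value of $x$ at the two admissible points $z=b_jq^{-m}$ and $z=b_j^{-1}q^{m}$ in \eqref{vpp}. The genericity hypothesis \eqref{gc} makes the $x_{j,m}$ pairwise distinct, since any coincidence forces $b_i/b_j\in q^{\mathbb Z}$ or $b_ib_j\in q^{\mathbb Z}$. Every $f\in V$ is a rational function of $x$ whose poles are simple and lie among the $x_{j,m}$, and whose numerator degree does not exceed its denominator degree; hence its partial fraction expansion has only a constant polynomial part, so $f=c+\sum_{j,m}r_{j,m}/(x-x_{j,m})$ (a finite sum). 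Thus $V$ is spanned by $1$ and the elementary fractions $1/(x-x_{j,m})$, and it suffices to produce each of these from the products \eqref{ss}.

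\emph{Step 2 (the products lie in $V$).} A single factor splits into blocks $(1-bq^\ell z)(1-bq^\ell/z)=1+b^2q^{2\ell}-2bq^\ell x$, each linear in $x$. For $k>0$ this makes $(bz^\pm)_k$ a degree-$k$ polynomial in $x$, while for $k=-p<0$ it is the reciprocal of a degree-$p$ polynomial whose zeros are exactly $x_{j,1},\dots,x_{j,p}$. Hence a product \eqref{ss} has poles only at admissible points (simple, by the distinctness from Step 1), and the balancing condition \eqref{kb} equates the $x$-degrees of numerator and denominator, giving regularity at infinity. So each product belongs to $V$.

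\emph{Step 3 (spanning, by induction on depth).} Let $V_n\subseteq V$ be the subspace of functions with poles only among the $x_{j,m}$ with $m\le n$; then $V_n=V_{n-1}\oplus\bigoplus_{j}\mathbb C\,(x-x_{j,n})^{-1}$ and $V=\bigcup_n V_n$. The base case $V_0$ is spanned by $1$ (all $k_j=0$). For the step, fix $j$, pick any $j'\ne j$, and take the product with $k_j=-n$, $k_{j'}=n$, the rest zero; by Step 2 it lies in $V_n$ with poles only at $x_{j,1},\dots,x_{j,n}$. Subtracting off its constant term and its lower fractions $1/(x-x_{j,\ell})$ with $\ell<n$ (all in $V_{n-1}$, hence in the span by induction) isolates a multiple of $1/(x-x_{j,n})$, and that multiple is nonzero precisely because the residue at $x_{j,n}$ is, up to a nonzero factor, the value of $(b_{j'}z^\pm)_n$ at $z=b_jq^{-n}$. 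This is exactly where \eqref{gc} enters, and it is the main obstacle: one must check that this deepest residue does not vanish and that the pole there is simple (both fail without the genericity of the $b_i/b_j$ and $b_ib_j$). Granting this, each $1/(x-x_{j,n})$ lies in the span, so $V_n\subseteq\operatorname{span}\eqref{ss}$ for all $n$, and the proof is complete.
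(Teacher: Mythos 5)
Your proof is correct, but it takes a genuinely different route from the paper's. The paper's argument clears denominators: writing $f = (b_1z^{\pm})_{-n_1}\dotsm(b_6z^{\pm})_{-n_6}\,p$ with $p$ a polynomial of degree at most $N=n_1+\dots+n_6$ in $(z+z^{-1})/2$, it then expands $p$ in the basis $(b_1q^{-n_1}z^{\pm})_{l}(b_2q^{-n_2}z^{\pm})_{N-l}$, $l=0,\dots,N$, citing an earlier result (Lemma 3.1 of \cite{ro}) for the fact that these products form a basis of the polynomials of degree at most $N$ — this is where \eqref{gc} enters there. You instead work with partial fractions in $x=(z+z^{-1})/2$ and induct on the depth of the poles, manufacturing each elementary fraction $1/(x-x_{j,n})$ from the single product with $k_j=-n$, $k_{j'}=n$ and checking that its deepest residue is nonzero; genericity enters in exactly the same two ways (distinctness of the $x_{j,m}$ and nonvanishing of $(b_{j'}b_jq^{-n},\,b_{j'}q^{n}/b_j)_n$), but the verification is self-contained rather than outsourced to a citation. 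What each approach buys: the paper's proof is shorter, at the price of invoking the external basis lemma; yours is longer but elementary, and it actually yields a sharper conclusion than the statement requires — namely that $V$ is already spanned by $1$ together with the products \eqref{ss} having only \emph{two} nonzero exponents, of the form $(b_{j'}z^{\pm})_{n}(b_jz^{\pm})_{-n}$, whereas the paper's expansion uses products with up to six nonzero exponents. Your Step 2 (the reverse inclusion, which the paper leaves implicit) is also handled cleanly.
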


\begin{proof}
Given $f\in V$, 
let $n_j$ denote the largest positive integer such that $f$ has a pole at $z=b_jq^{-n_j}$; if there is no such pole we let $n_j=0$. Then,
$$f\left(\frac{z+z^{-1}}2\right)={(b_1z^{\pm})_{-n_1}\dotsm (b_6z^{\pm})_{-n_6}}\,p\left(\frac{z+z^{-1}}2\right), $$
with $p$ a polynomial of degree at most $N=n_1+\dots+n_6$. If we can expand
\begin{equation}\label{nue}p\left(\frac{z+z^{-1}}2\right)=\sum_{\substack {l_1,\dots,l_6\geq 0,\\l_1+\dots+l_6=N}}C_l\prod_{j=1}^6 (b_jq^{-n_j}z^{\pm})_{l_j}, 
\end{equation}
      then
 $$ f\left(\frac{z+z^{-1}}2\right)=\sum_{\substack {k_1,\dots,k_6\geq 0,\\k_1+\dots+k_6=0}}C_{k+n}\prod_{j=1}^6 (b_jz^{\pm})_{k_j}$$
 and we are done.
 
 We claim that there is an expansion of the form \eqref{nue} with $l_3=\dots=l_6=0$. Indeed, 
 assuming the condition \eqref{gc}, the polynomials  
 $$(b_1q^{-n_1}z^{\pm})_{l}(b_2q^{-n_2}z^{\pm})_{N-l},\qquad l=0,\dots,N$$
  form a basis for the space of polynomials of degree at most $N$, see e.g.\ 
 \cite[Lemma 3.1]{ro}.
\end{proof}

We now recall Rahman's  integral evaluation
\begin{equation}\label{ri} \oint\frac{(z^{\pm 2})_\infty\theta(\lambda z^\pm)}{(b_1z^{\pm},\dots,b_6z^{\pm})_\infty}\,\frac{dz}{2\pi\ti z}
=\frac{2\left\{\theta(\lambda b_1,\dots,\lambda b_6)-q\lambda^{-2}\theta(\lambda/b_1,\dots,\lambda/b_6)\right\}}{(q)_\infty\theta(\lambda^2)\prod_{1\leq i<j\leq 6}(b_ib_j)_\infty},\end{equation}
where \eqref{b} is assumed. The integration is over a positively oriented contour separating the poles at
$b_jq^{\mathbb Z_{\geq 0}}$ from their reciprocals. 
The identity \eqref{ri} is a special case of an integral from \cite{r1} (see  \cite[Ex.\ 6.7]{gr}), which was explicitly stated in
\cite{r2}. 

Let us replace $b_j$ in \eqref{ri} with $b_jq^{k_j}$, where $k_j$ are integers subject to \eqref{kb}.
Using \eqref{tqp} and other elementary facts, the resulting identity can be expressed as
\begin{multline*}\oint\frac{(z^{\pm 2})_\infty\theta(\lambda z^\pm)}{(b_1z^{\pm},\dots,b_6z^{\pm})_\infty}\,(b_1z^\pm)_{k_1}\dotsm (b_6z^\pm)_{k_6}\,\frac{dz}{2\pi\ti z}\\
=\frac{2\left\{\theta(\lambda b_1,\dots,\lambda b_6)-q\lambda^{-2}\theta(\lambda/b_1,\dots,\lambda/b_6)\right\}}{(q)_\infty\theta(\lambda^2)\prod_{1\leq i<j\leq 6}(b_ib_j)_\infty}\frac{\prod_{1\leq i<j\leq 6}(b_ib_j)_{k_i+k_j}}{\prod_{j=1}^6q^{\binom{k_j}2}b_j^{k_j}}. \end{multline*}
By Lemma \ref{sl}, this proves the following result.

\begin{proposition}
There exists a functional $\mathbf J$ on the space $V$ such that
\begin{equation}\label{id}\mathbf J\left((b_1z^\pm)_{k_1}\dotsm(b_6z^\pm)_{k_6}\right)=\frac{\prod_{1\leq i<j\leq 6}(b_ib_j)_{k_i+k_j}}{\prod_{j=1}^6q^{\binom{k_j}2}b_j^{k_j}}. \end{equation}
For generic values of $\lambda$, it is given by
\begin{multline}\label{ii}\mathbf J(f)=\frac{(q)_\infty\theta(\lambda^2)\prod_{1\leq i<j\leq 6}(b_ib_j)_\infty}{2\left\{\theta(\lambda b_1,\dots,\lambda b_6)-q\lambda^{-2}\theta(\lambda/b_1,\dots,\lambda/b_6)\right\}}\\
\times \oint f\left(\frac{z+z^{-1}}2\right)\frac{(z^{\pm 2})_\infty\theta(\lambda z^\pm)}{(b_1z^{\pm},\dots,b_6z^{\pm})_\infty}\,\frac{dz}{2\pi\ti z},\end{multline}
where the integration is over a positively oriented contour encircling all  poles of the integrand of the form $z\in b_j q^{\mathbb Z}$ and no poles of
the form $z\in b_j^{-1} q^{\mathbb Z}$.
\end{proposition}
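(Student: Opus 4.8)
The plan is to take the integral in \eqref{ii}, for one fixed generic value of $\lambda$, as the definition of $\mathbf J$ and then verify that it has the two stated properties. The right-hand side of \eqref{ii} depends linearly on $f$, so (with the contour chosen for each $f$ as described in the statement) it defines a linear functional on $V$; the existence of \emph{some} functional is therefore free, and what really needs proof is the evaluation \eqref{id} on the spanning functions \eqref{ss} of Lemma \ref{sl} together with the fact that the result does not depend on $\lambda$. Since those functions span $V$, once \eqref{id} is established with a $\lambda$-free right-hand side, linearity immediately forces $\mathbf J(f)$ to be $\lambda$-independent for every $f\in V$, and both assertions follow.

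The computational core is a parameter shift in Rahman's evaluation \eqref{ri}. I fix integers $k_1,\dots,k_6$ with $k_1+\dots+k_6=0$ and replace each $b_j$ by $b_jq^{k_j}$; by \eqref{kb} the balancing condition \eqref{b} is preserved, so \eqref{ri} still applies. On the left I use $(b_jq^{k_j}z)_\infty=(b_jz)_\infty/(b_jz)_{k_j}$, and the same identity with $z$ replaced by $z^{-1}$, to rewrite the shifted integrand as the original Rahman integrand times $\prod_j(b_jz^\pm)_{k_j}$, which is $\mathbf J$ applied to \eqref{ss} up to the prefactor of \eqref{ii}. On the right I push the shift through the theta functions by \eqref{tqp}: the products $\prod_j\theta(\lambda b_jq^{k_j})$ and $\prod_j\theta(\lambda q^{-k_j}/b_j)$ both acquire the common factor $q^{-\sum_j\binom{k_j}2}\prod_j b_j^{-k_j}$, the signs and powers of $\lambda$ cancelling because $\sum_jk_j=0$, so this factor comes out of the brace, while $(b_ib_jq^{k_i+k_j})_\infty=(b_ib_j)_\infty/(b_ib_j)_{k_i+k_j}$ rewrites the denominator. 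Since the prefactor of \eqref{ii} is exactly the reciprocal of Rahman's original right-hand side, the common data cancel and there remains precisely the ratio $\prod_{1\le i<j\le6}(b_ib_j)_{k_i+k_j}/\prod_j q^{\binom{k_j}2}b_j^{k_j}$ of \eqref{id}.

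The step that deserves genuine care, and which I expect to be the main obstacle, is the contour rather than the algebra. Shifting $b_j\mapsto b_jq^{k_j}$ moves the geometric pole sequences of \eqref{ri}, and the factor $\prod_j(b_jz^\pm)_{k_j}$ simultaneously cancels poles when $k_j>0$ and creates finitely many new ones when $k_j<0$. I would check that the net effect is to turn the contour of \eqref{ri} into the one in the proposition, enclosing every pole of the integrand lying in $b_jq^{\mathbb Z}$ and excluding every one lying in $b_j^{-1}q^{\mathbb Z}$; the subtlety is that the poles coming from $f$ contribute finitely many poles of $b_jq^{\mathbb Z}$ located near $\infty$ that must nonetheless be enclosed, and finitely many poles of $b_j^{-1}q^{\mathbb Z}$ located near $0$ that must nonetheless be excluded, so the contour cannot be a round circle. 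That such a closed contour exists is guaranteed by the genericity hypothesis \eqref{gc}, which keeps the families $b_jq^{\mathbb Z}$ and $b_j^{-1}q^{\mathbb Z}$ disjoint. Once this pole bookkeeping is settled, the theta-function simplification above is routine and the spanning property of Lemma \ref{sl} closes the argument.
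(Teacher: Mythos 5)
Your proposal is correct and takes essentially the same route as the paper: substitute $b_j\mapsto b_jq^{k_j}$ (with $k_1+\dots+k_6=0$) in Rahman's evaluation \eqref{ri}, simplify the left side via $(aq^k)_\infty=(a)_\infty/(a)_k$ and the right side via \eqref{tqp}, and conclude by the spanning property of Lemma \ref{sl}. The contour bookkeeping you single out is exactly what the paper defers to the remark following the proposition (the points $z=b_jq^{k-n_j}$, $k\in\mathbb Z_{\geq 0}$, inside and their reciprocals outside, with existence guaranteed by \eqref{gc}), so your account matches the paper's, just stated up front.
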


Note that the choice of contour in \eqref{ii} depends on $f$.  If $n_j$ are defined as in the proof of Lemma \ref{sl}, then the points
$z=b_jq^{k-n_j}$, $k\in\mathbb Z_{\geq 0}$, should be inside the contour of integration and their reciprocals outside.
For fixed $f$, 
the existence of such a contour follows from  \eqref{gc}, but no contour works for all $f$.

We will refer to $\mathbf J$ as \emph{Rahman's functional}. Rahman preferred to work with \eqref{ii} in the special case $\lambda=b_6$. This yields the more compact but less symmetric expression
\begin{equation}\label{rii}\mathbf J(f)=\frac{(q)_\infty\prod_{1\leq i<j\leq 5}(b_ib_j)_\infty}{2\prod_{j=1}^5(b_1\dotsm b_5/b_j)_\infty}\oint f\left(\frac{z+z^{-1}}2\right)\frac{(z^{\pm 2},b_1\dotsm b_5 z^\pm)_\infty}{(b_1z^{\pm},\dots,b_5z^{\pm})_\infty}\,\frac{dz}{2\pi\ti z}.\end{equation}

We will also obtain expressions for $\mathbf J$ in terms of discrete measures. To this end, replace $b_j$ in \eqref{bj} by $b_jq^{k_j}$
where, as before, we assume \eqref{kb}. The resulting identity can be written 
\begin{multline}\label{ai}
\frac{(1-\lambda^2)\left\{\theta(\mu b_1,\dots,\mu b_6)-{q}{\mu^{-2}}\,\theta(\mu/b_1,\dots,\mu/b_6)\right\}\prod_{j=1}^6(q\lambda^\pm/b_j)_\infty}{(q)_\infty\,\theta(\lambda^2,\mu^2,\mu\lambda^\pm)\prod_{1\leq i<j\leq 6}(q/b_ib_j)_\infty}\\
\times
\sum_{x=-\infty}^\infty\frac{1-\lambda^2q^{2x}}{1-\lambda^2}q^x\prod_{j=1}^6\frac{(\lambda b_j)_x}{(q\lambda/b_j)_x}\,(\lambda b_jq^x,b_jq^{-x}/\lambda)_{k_j}
\\
+\text{\rm idem}(\lambda;\mu)=\frac{\prod_{1\leq i<j\leq 6}(b_ib_j)_{k_i+k_j}}{\prod_{j=1}^6q^{\binom{k_j} 2}b_j^{k_j}}.
\end{multline}
This proves the following result.

\begin{proposition}\label{dip}
For generic values of $\lambda$ and $\mu$, 
\begin{multline}\label{di}
\mathbf J(f)=
\frac{(1-\lambda^2)\left\{\theta(\mu b_1,\dots,\mu b_6)-\frac{q}{\mu^2}\,\theta(\mu/b_1,\dots,\mu/b_6)\right\}\prod_{j=1}^6(q\lambda^\pm/b_j)_\infty}{(q)_\infty\theta(\lambda^2,\mu^2,\lambda\mu,\mu/\lambda)\prod_{1\leq i<j\leq 6}(q/b_ib_j)_\infty}\\
\times
\sum_{x=-\infty}^\infty\frac{1-\lambda^2q^{2x}}{1-\lambda^2}q^x\prod_{j=1}^6\frac{(\lambda b_j)_x}{(q\lambda/b_j)_x}\,f\left(\frac{\lambda q^x+\lambda^{-1}q^{-x}}2\right)
+\text{\rm idem}(\lambda;\mu).
\end{multline}
\end{proposition}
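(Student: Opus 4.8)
The plan is to deduce the discrete representation \eqref{di} from the characterisation \eqref{id} of $\mathbf J$, reducing everything to the spanning family of Lemma \ref{sl}. Write $\tilde{\mathbf J}(f)$ for the right-hand side of \eqref{di}; for generic $\lambda,\mu$ the two series are very-well-poised bilateral sums and converge, so $\tilde{\mathbf J}$ is a well-defined linear functional on $V$. By Lemma \ref{sl} it then suffices to check that $\tilde{\mathbf J}$ reproduces \eqref{id} on each basis element $f=(b_1z^\pm)_{k_1}\dotsm(b_6z^\pm)_{k_6}$ with $k_1+\dots+k_6=0$. Putting $z=\lambda q^x$ turns such an $f$ into $\prod_{j=1}^6(\lambda b_jq^x,b_jq^{-x}/\lambda)_{k_j}$, so that $\tilde{\mathbf J}(f)$ is exactly the sum displayed in \eqref{ai}.

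The key step is to recognise this sum as a shifted ${}_8\psi_8$-series. First I would use the elementary relations $(aq^m)_x=(a)_{x+m}/(a)_m$ together with the reflection formula $(a)_m=(-a)^mq^{\binom m2}(q^{1-m}/a)_m$ to rewrite, for each $j$,
\[
\frac{(\lambda b_jq^{k_j})_x}{(\lambda q^{1-k_j}/b_j)_x}
=c_j\,q^{xk_j}\,\frac{(\lambda b_j)_x}{(\lambda q/b_j)_x}\,(\lambda b_jq^x,b_jq^{-x}/\lambda)_{k_j},
\]
where $c_j$ is independent of $x$. Taking the product over $j$ and invoking the balancing condition \eqref{kb}, the extraneous factor $\prod_jq^{xk_j}=q^{x\sum_jk_j}$ collapses to $1$; this cancellation is precisely what forces $k_1+\dots+k_6=0$, and it identifies $\tilde{\mathbf J}(f)$, up to the explicit constant $\prod_jc_j$, with the ${}_8\psi_8$-series obtained from \eqref{bj} by the replacement $b_j\mapsto b_jq^{k_j}$. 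This replacement is legitimate because it preserves \eqref{b}, again thanks to $\sum_jk_j=0$.

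It then remains to apply Proposition \ref{bjp} with parameters $b_jq^{k_j}$ and to simplify. On the prefactors I would use \eqref{tqp} to reduce $\theta(\mu b_jq^{k_j})$ and $\theta(\mu q^{-k_j}/b_j)$ to $\theta(\mu b_j)$ and $\theta(\mu/b_j)$ times explicit monomials, and the relation $(aq^m)_\infty=(a)_\infty/(a)_m$ to reduce the infinite products $(q\lambda^\pm/b_jq^{k_j})_\infty$ and $(q/b_ib_jq^{k_i+k_j})_\infty$; all of these reassemble into the prefactor of \eqref{di}. The one place where I expect real work is the final bookkeeping: one must verify that the powers of $q$, the signs, the monomials produced by \eqref{tqp} and by the reflection formula, and the constants $c_j$ combine exactly into the compact right-hand side $\prod_{1\leq i<j\leq6}(b_ib_j)_{k_i+k_j}/\prod_{j=1}^6q^{\binom{k_j}2}b_j^{k_j}$ of \eqref{id}. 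Once this is confirmed, \eqref{ai} holds, so $\tilde{\mathbf J}$ agrees with $\mathbf J$ on the spanning set and hence, by linearity and Lemma \ref{sl}, on all of $V$, which is the assertion of the proposition.
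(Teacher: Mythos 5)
Your proof is correct and takes essentially the same route as the paper: the paper obtains \eqref{di} by substituting $b_j\mapsto b_jq^{k_j}$ (with $k_1+\dots+k_6=0$, so that \eqref{b} is preserved) into the bilateral Jackson summation \eqref{bj}, rewriting the result as \eqref{ai}, and matching its right-hand side with \eqref{id} on the spanning functions of Lemma \ref{sl} --- exactly the reduction you perform, merely run in the opposite direction. The $q$-shifted-factorial manipulation you single out as the key step (including the cancellation of $q^{x\sum_j k_j}$ forced by \eqref{kb}) is precisely the ``elementary facts'' the paper invokes, and your bookkeeping identity for $(\lambda b_jq^{k_j})_x/(\lambda q^{1-k_j}/b_j)_x$ is valid.
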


In contrast to \eqref{ii}, where the contour of integration depends on $f$, the identity \eqref{di} holds uniformly on the whole space $V$.

Assume for simplicity that $f((z+z^{-1})/2)$ has no poles at
$z^\pm\in b_5q^{\mathbb Z_{<0}}$ or $b_6q^{\mathbb Z_{<0}}$. Then, we may let  $\lambda=b_5$ and $\mu=b_6$
in \eqref{di} and conclude that
\begin{multline}\label{dis}
\mathbf J(f)=
\frac{\prod_{j=1}^4(qb_5/b_j,b_jb_6)_\infty}{(qb_5^2,b_6/b_5)_\infty\prod_{1\leq i<j\leq 4}(q/b_ib_j)_\infty}\\
\times\sum_{x=0}^\infty\frac{1-b_5^2q^{2x}}{1-b_5^2}q^x\prod_{j=1}^6\frac{(b_5b_j)_x}{(qb_5/b_j)_x}
\,f\left(\frac{b_5q^x+b_5^{-1}q^{-x}}2\right)
+\text{\rm idem}(b_5;b_6).
\end{multline}
For general functions in $V$ one obtains in the same way an expression involving also residues of $f$ at points corresponding to $x<0$. This is discussed in \cite[Prop.~6.4]{br}, but the general case of \eqref{di} 
seems to be new.

Van de Bult and Rains \cite{br} also found integral formulas for $\mathbf J$ that are non-symmetric in the sense that the measure is not invariant under $z\mapsto 1/z$. 
We will now give a more general result of this type.

\begin{proposition}
The functional $\mathbf J$ can be expressed as
\begin{equation}\label{nsj}\mathbf J(f)=\frac{1}{C}
 \oint f\left(\frac{z+z^{-1}}2\right)\frac{z-z^{-1}}{z^2}\prod_{j=1}^6\frac{\theta(\lambda_jz)}{(b_jz^\pm)_\infty}\,\frac{dz}{2\pi\ti z}, \end{equation}
where the integration is as above, $\lambda_j$ are generic parameters subject to $\lambda_1\dotsm\lambda_6=q$ and
$$C=\frac{\prod_{j=1}^4\theta(\lambda_j\lambda_2)\left\{\prod_{j=1}^6\theta(\lambda_1b_j)-{q}{\lambda_1^{-2}}\prod_{j=1}^6\theta(\lambda_1/b_j)\right\}}{\lambda_2\theta(\lambda_1^2,\lambda_1/\lambda_2)(q)_\infty\prod_{1\leq i<j\leq 6}(b_ib_j)_\infty}+\textup{idem}(\lambda_1;\lambda_2). $$
\end{proposition}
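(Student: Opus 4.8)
The plan is to show that the right-hand side of \eqref{nsj}, viewed as a functional on $V$, agrees with $\mathbf J$ by checking it on the spanning set \eqref{ss}. Since $\mathbf J$ is uniquely determined by its values \eqref{id} on these spanning functions, it suffices to evaluate the proposed integral on each product $(b_1z^\pm)_{k_1}\dotsm(b_6z^\pm)_{k_6}$ with $k_1+\dots+k_6=0$ and obtain the same normalized theta-product. First I would observe that the measure in \eqref{nsj} differs from the symmetric measure in \eqref{ii} only through the numerator factor: in \eqref{ii} one has $\theta(\lambda z^\pm)(z-z^{-1})$-type symmetry, whereas here the product $\prod_{j=1}^6\theta(\lambda_jz)$ together with $(z-z^{-1})/z^2$ is \emph{not} invariant under $z\mapsto 1/z$. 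The key is that we are free to symmetrize the integrand, since $f((z+z^{-1})/2)/(b_jz^\pm)_\infty$ already is invariant under $z\mapsto z^{-1}$; replacing the remaining factor by its average over $z\mapsto z^{-1}$ does not change the integral provided the contour is symmetric.

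The main computation is thus the symmetrization identity. I would establish that
$$\frac{z-z^{-1}}{z^2}\prod_{j=1}^6\theta(\lambda_jz)+\frac{z^{-1}-z}{z^{-2}}\prod_{j=1}^6\theta(\lambda_j/z)$$
is, up to a constant independent of $z$, equal to $(z-z^{-1})\theta(\lambda z^\pm)$ for a suitable $\lambda$, or more precisely that the full antisymmetric numerator factors through the symmetric kernel appearing in \eqref{ri}. Here the balancing condition $\lambda_1\dotsm\lambda_6=q$ plays the same role that \eqref{b} plays in \eqref{ri}, and the quasi-periodicity \eqref{tqp} governs how the six theta factors transform under $z\mapsto qz$ and $z\mapsto 1/z$. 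Concretely, both $\prod_{j=1}^6\theta(\lambda_jz)$ and the target expression are holomorphic theta-type functions of $z$ of the same total degree and with matching quasi-periodicity under $z\mapsto qz$; comparing their transformation behavior and their zeros should pin down the relation up to the explicit constant, which is exactly what the stated $C$ records through its $\textup{idem}(\lambda_1;\lambda_2)$ structure.

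Given the symmetrization, I would then reduce \eqref{nsj} to a known evaluation. After symmetrizing, the integral becomes a symmetric one of the form already handled by \eqref{ri} (applied with $b_j\mapsto b_jq^{k_j}$, as in the derivation of \eqref{id}), and the constant $C$ is determined by matching the normalization: demanding that the functional send the constant function $1$ (the case $k_1=\dots=k_6=0$) to $\mathbf J(1)=1$ via \eqref{id} forces the displayed value of $C$. The appearance of $\prod_{j=1}^4\theta(\lambda_j\lambda_2)$ and the bracketed theta-difference $\prod_{j=1}^6\theta(\lambda_1b_j)-q\lambda_1^{-2}\prod_{j=1}^6\theta(\lambda_1/b_j)$ strongly suggests that the constant is computed by the same theta identity \eqref{ftt} used in Proposition \ref{bjp}, now applied with the $\lambda_j$ in place of the earlier parameters.

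The hard part will be the theta-function bookkeeping in the symmetrization step: verifying that the non-symmetric numerator $\frac{z-z^{-1}}{z^2}\prod_{j=1}^6\theta(\lambda_jz)$, once averaged over $z\mapsto 1/z$, collapses to the symmetric kernel of \eqref{ri} with the correct proportionality constant. This requires carefully tracking the quasi-periodicity factors from \eqref{tqp} for all six theta functions simultaneously and invoking the Weierstrass-type theta identity \eqref{ftt} to re-express the resulting difference of products. Everything else --- expanding on the basis \eqref{ss}, applying \eqref{ri} term by term, and matching against \eqref{id} --- is routine once this identity is in hand.
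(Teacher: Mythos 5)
Your proposal is correct and follows essentially the same route as the paper's own proof: symmetrize the integrand over $z\mapsto 1/z$, decompose the resulting antisymmetric theta kernel via the identity \eqref{ftt} (with $\mu$ replaced by $z$ and the $b_j$ by the $\lambda_j$), and then reduce to the already-established symmetric formula \eqref{ii}/\eqref{ri}, with $C$ emerging from the two resulting normalization constants. One caution: your first formulation of the symmetrization step — that the averaged kernel equals a constant times $(z-z^{-1})\theta(\lambda z^\pm)$ for a \emph{single} suitable $\lambda$ — is false (the relevant space of theta functions is two-dimensional, cf.\ Lemma \ref{al}), and it is exactly the two-term decomposition supplied by \eqref{ftt}, with kernels attached to both $\lambda_1$ and $\lambda_2$, that accounts for the $\textup{idem}(\lambda_1;\lambda_2)$ structure of $C$; your ``more precisely'' hedge and subsequent appeal to \eqref{ftt} do capture this.
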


\begin{proof}
Symmetrizing the integral, we may write the right-hand  side of \eqref{nsj} as
$$\frac{1}{2C}
 \oint f\left(\frac{z+z^{-1}}2\right)\frac{(z-z^{-1})\left\{z^{-2}\prod_{j=1}^6\theta(\lambda_jz)-z^{2}\prod_{j=1}^6\theta(\lambda_j/z)\right\}}{\prod_{j=1}^6(b_jz^\pm)_\infty}\,\frac{dz}{2\pi\ti z}. $$
 By \eqref{ftt}, with $\mu$ replaced with $z$ and $b_j$ with an appropriate permutation of $\lambda_j$, the factor in brackets can be expressed as 
 $$\frac{z^{-1}\theta(z^2,\lambda_2\lambda_3,\dots,\lambda_2\lambda_6,\lambda_1z^\pm)}{\lambda_2\theta(\lambda_1/\lambda_2)}+\textup{idem}(\lambda_1;\lambda_2). $$
 As
 $$z^{-1}(z-z^{-1})\theta(z^2)=(z^{\pm 2})_\infty, $$
 we may then apply \eqref{ii} to express the integral in terms of $\mathbf J$.
\end{proof}

The non-symmetric integral formulas given in \cite{br} correspond to the special case of \eqref{nsj} when $\lambda_j=b_j$ for $j=1,\dots,4$. In that case, 
\begin{align*}C&=\frac{\theta(b_1b_2,b_1b_3,b_1b_4,b_2b_3,b_2b_4)}{b_2\theta(b_1/b_2)(q)_\infty\prod_{1\leq i<j\leq 4}}\\
&\times\left\{\theta(b_2\lambda_5,b_2\lambda_6,b_1b_5,b_1b_6)-\theta(b_1\lambda_5,b_1\lambda_2,b_2b_5,b_2b_6)\right\}. \end{align*}
Since $b_5b_6=\lambda_5\lambda_6$ we may apply Weierstrass' identity (see e.g.\ \cite[\S 1.4]{re})
to conclude that the factor in bracket equals
$$-b_2\lambda_6\theta(b_1/b_2,b_3b_4,\lambda_5/b_5,\lambda_5/b_6). $$
In the corresponding specialization of \eqref{nsj}, we find it convenient to replace $z$ with $1/z$
 and write $\lambda=q/\lambda_5$. We then obtain
 \begin{align}\nonumber\mathbf J(f)&=\frac{b_5b_6\lambda\,(q,b_5b_6)_\infty\prod_{j=1}^4(b_jb_5,b_jb_6)_\infty}{\theta(b_5\lambda,b_6 \lambda)\prod_{1\leq i<j\leq 4}(q/b_ib_j)_\infty}\\
 \label{brn}&\quad\times\oint  f\left(\frac{z+z^{-1}}2\right)(z-z^{-1})\frac{\theta(\lambda z,z/b_5b_6\lambda)}{(b_5z^\pm,b_6z^\pm)_\infty}\prod_{j=1}^4\frac{(qz/b_j)_\infty}{(b_jz)_\infty}\,\frac{dz}{2\pi\ti z}. \end{align}
This is also contained in \cite[Prop.\ 6.3]{br}. As is discussed in \cite{br}, 
\eqref{dis} can be obtained from \eqref{brn} through shrinking the
 contour 
of integration to zero, picking up residues at the points $b_5q^{\mathbb Z_{\geq 0}}$ 
and $b_6q^{\mathbb Z_{\geq 0}}$.

\subsection{Rahman's biorthogonal functions}

Rahman \cite{r1,r3} constructed rational functions that are biorthogonal with respect to the functional $\mathbf J$.  To explain this, we introduce the subspace $V_n^i$ of $V$ consisting of rational functions 
having no poles except possibly at $z^\pm=b_iq^{-k-1}$, where $i=1,\dots,6$ is fixed and $0\leq k\leq n$.
It is easy to see that $\dim(V_n^i)=n+1$. If $1\leq i\neq j\leq 6$, we may look for a function $Q_n^{(i,j)}\in V_n^i$ such that
$I(Q_n^{(i,j)}f)=0$ for all $f\in V_{n-1}^j$.  As this is an $n$-dimensional  condition on an $(n+1)$-dimensional space, it is not surprising that for generic parameters the solution is unique up to normalization. We then have the biorthogonality relations
$\mathbf J(Q_n^{(i,j)}Q_m^{(j,i)})=0$ for $m\neq n$. 

We may assume that $i=5$, $j=6$. Then, $Q_n^{(5,6)}$ is given by the function (our notation differs from that of Rahman)
\begin{multline*}Q_n\left(\frac{z+z^{-1}}2;b_1,\dots,b_6;q\right)
=\frac{(b_1b_2,b_1b_3,b_1b_4,1/b_1b_6)_n}{(qb_1/b_5)_n}\\
\times\,{}_{10}W_9(b_1/b_5;b_1z,b_1/z,q/b_2b_5,q/b_3b_5,q/b_4b_5,q^n/b_5b_6,q^{-n};q).
\end{multline*}
It follows from Bailey's ${}_{10}W_9$-transformation \cite[Eq.\ (III.28)]{gr} that $Q_n$ is symmetric in the parameters $b_1,\dots,b_4$.

More precisely, Rahman proved that if
\begin{subequations}\label{rqr}
\begin{align}
Q_n&=Q_n\left(\frac{z+z^{-1}}2;b_1,b_2,b_3,b_4,b_5,b_6\right),\\
R_n&=Q_n\left(\frac{z+z^{-1}}2;b_1,b_2,b_3,b_4,b_6,b_5\right), \end{align}
\end{subequations}
then 
\begin{equation}\label{rbr}\mathbf J(Q_mR_n)=\delta_{mn}\frac{1-q^{-1}b_1b_2b_3 b_4}{1-q^{2n-1}b_1b_2b_3b_4}\frac{(q)_n\prod_{1\leq i<j\leq 4}(b_ib_j)_n}{q^n(q^{-1}b_1b_2b_3b_4)_n}. \end{equation}
Rahman formulated this result with $\mathbf J$ given by  \eqref{rii} (and modifications, where the contour is deformed to the unit circle and then moved to an interval  by a change of variables).
Using instead \eqref{ii},
\eqref{di} or \eqref{nsj} gives new explicit forms of \eqref{rbr}. Note, in particular,
that \eqref{di} gives biorthogonality relations with respect to a fixed measure for all $m$ and $n$,
whereas in \eqref{ii}, \eqref{rii} and \eqref{nsj} the integrals must be deformed (or  discrete terms added) as $m$ and $n$ increase.

\subsection{The Al-Salam--Ismail functional}
There are many interesting limit cases of Rahman's biorthogonal functions \cite{br}.
From the perspective of superconformal indices (see \S \ref{sis}), a particularly interesting case is when three of the parameters $b_j$ tend to zero and the remaining three to infinity, while the variable $z$ is scaled so that either $zb_j$ or $z/b_j$ is fixed for each $j$.
To describe this limit, we  make the substitutions
$$(b_1,b_2,b_3,b_4,b_5,b_6)\mapsto(a_1/c,a_2/c,a_3/c,b_1c,b_2c,b_3c), $$
so that the balancing condition  \eqref{b} becomes \eqref{aba}. We also replace $z$ by $cz$.  The possible poles \eqref{vpp}
are then located at
\begin{equation}\label{wpp}z\in a_j^{-1}q^{\mathbb Z_{>0}},\quad z\in b_jq^{\mathbb Z_{<0}},\qquad 1\leq j\leq 3,\end{equation}
together with additional poles tending to zero or infinity as $c$ tends to zero. 

More concretely, let
$f((z+z^{-1})/2)$ denote the spanning function \eqref{ss}. We replace $(k_1,\dots,k_6)$ by $(k_1,k_2,k_3,l_1,l_2,l_3)$ and write
\begin{equation}\label{klb}T=k_1+k_2+k_3=-l_1-l_2-l_3. \end{equation}
 Then,
\begin{align*}
\lim_{c\rightarrow 0}c^{2T}f\left(\frac{(cz)+(cz)^{-1}}2\right)&=\lim_{c\rightarrow 0}\prod_{j=1}^3
c^{2k_j}(a_jz,a_j/c^2z)_{k_j}(b_j/z,b_jc^2z)_{l_j}\\
&=(-1)^T\prod_{j=1}^3q^{\binom{k_j}2}a_j^{k_j}\cdot g(z),
\end{align*}
where
\begin{equation}\label{g}g(z)=z^{-T}\prod_{j=1}^3(a_jz)_{k_j}(b_j/z)_{l_j}.\end{equation}
It is easy to see that, as $k_j$, $l_j$ and $T$ vary over integers subject to \eqref{klb}, the functions $g$
span the space of rational functions in $z$ that are regular everywhere (including at infinity) except for possible single poles at \eqref{wpp}.
We will denote this space by $W$. We also compute
\begin{align*}
\lim_{c\rightarrow 0}c^{2T}\mathbf J(f)&=\lim_{c\rightarrow 0}\frac{c^{2T}\prod_{1\leq i<j\leq 3}(a_ia_j/c^2)_{k_i+k_j}(b_ib_jc^2)_{l_i+l_j}\prod_{i,j=1}^3(a_ib_j)_{k_i+l_j}}{\prod_{j=1}^3q^{\binom {k_j}2+\binom{l_j}2}(a_j/c)^{k_j}(b_jc)^{l_j}}\\
&=t^Tq^{\binom T2}\frac{\prod_{i,j=1}^3(a_ib_j)_{k_i+l_j}}{\prod_{j=1}^3q^{\binom{l_j}2}b_j^{l_j}},
\end{align*}
where $t$ is as in \eqref{t}.
Thus, $\mathbf J$ degenerates to a functional  $\mathbf K$ on $W$, which acts on the spanning functions \eqref{g} as
\begin{equation}\label{kds}\mathbf K(g)=\frac{(-t)^Tq^{\binom T2}\prod_{i,j=1}^3(a_ib_j)_{k_i+l_j}}{\prod_{j=1}^3q^{\binom{k_j}2+\binom{l_j}2}a_j^{k_j}b_j^{l_j}}.\end{equation}
As we explain in \S \ref{grss}, Al-Salam and Ismail constructed explicit biorthogonal systems for the functional $\mathbf K$ \cite{ai}. For this reason, we call it the Al-Salam--Ismail functional.

The reader may find it puzzling
 that $\mathbf J$ and its degenerate case $\mathbf K$ seem to depend on 
the same number of parameters. The explanation is that we can simultaneously scale $z$, $1/a_j$ and $b_j$ by the same constant without changing $\mathbf K$. This effectively reduces the number of parameters by one. We also mention the symmetry
\begin{equation}\label{kis}\tilde{\mathbf K}(\tilde g)=\mathbf K(g),\qquad g\in W, \end{equation}
where $\tilde g(z)=g(1/z)$ and $\tilde{\mathbf K}$ is obtained from $\mathbf K$ after interchanging the parameters $a_j$ and $b_j$.

One may obtain an integral expression for $\mathbf K$ from Gasper's identity
\cite{g}, \cite[Eq.\ (4.11.3)]{gr}
\begin{equation}\label{gci}\oint\frac{(qz/b_3)_\infty\theta(\lambda z,qz/\lambda b_1b_2)}{(a_1z,a_2z,a_3z,b_1/z,b_2/z)_\infty}\frac{dz}{2\pi\ti z} 
=\frac{\theta(b_1\lambda,b_2\lambda)}{(q)_\infty}\prod_{j=1}^3\frac{(q/b_3a_j)_\infty}{(b_1a_j,b_2a_j)_\infty},
\end{equation}
which holds for generic parameters subject to \eqref{aba}.
More symmetric integral formulas arise from the following
 one-parameter extension of \eqref{gci}.

\begin{proposition}\label{sgl}
For generic parameters subject to the relations 
\begin{equation}\label{lab}\lambda_1\lambda_2\lambda_3=qa_1a_2a_3=q^2/b_1b_2b_3,\end{equation} 
\begin{multline}\label{ggi}
\oint \prod_{j=1}^3\frac{\theta(\lambda_jz)}{(a_jz,b_j/z)_\infty}\frac{dz}{2\pi\ti z}
=\frac{1}{(q)_\infty\theta(\lambda_1/\lambda_2)\prod_{i,j=1}^3(a_ib_j)_\infty}\\
\times\left\{\prod_{j=1}^3\theta(\lambda_1/a_j,\lambda_2b_j)-\frac{\lambda_1}{\lambda_2}\prod_{j=1}^3\theta(\lambda_1b_j,\lambda_2/a_j)\right\}.
\end{multline}
\end{proposition}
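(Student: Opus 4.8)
The plan is to prove \eqref{ggi} by analytic continuation in the single parameter $\lambda_1$, fixing $a_1,a_2,a_3,b_1,b_2,b_3$ and $\lambda_2$ and regarding $\lambda_3=q^2/(b_1b_2b_3\lambda_1\lambda_2)$ as determined by \eqref{lab}. Both sides are then meromorphic in $\lambda_1\in\mathbb C^*$, and I would first check that they are holomorphic and obey the same transformation law under $\lambda_1\mapsto q\lambda_1$. In the integrand on the left the factor $\theta(\lambda_2z)$ is inert, while $\theta(\lambda_1z)\theta(\lambda_3z)$ picks up, by \eqref{tqp}, the $z$-independent factor $q^{-1}c_0/\lambda_1^2$ with $c_0=q^2/(b_1b_2b_3\lambda_2)$; since the contour does not depend on $\lambda_1$, the left-hand side satisfies $\phi(q\lambda_1)=\frac{q}{\lambda_1^2\lambda_2b_1b_2b_3}\,\phi(\lambda_1)$, and it is holomorphic because $\prod_j\theta(\lambda_jz)$ is entire in $\lambda_1$ and the integral converges uniformly on compact subsets of $\mathbb C^*$. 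Writing $A=\prod_{j=1}^3\theta(\lambda_1/a_j,\lambda_2b_j)$ and $B=\prod_{j=1}^3\theta(\lambda_1b_j,\lambda_2/a_j)$, a direct computation with \eqref{tqp} shows that the right-hand side obeys the identical law: both $A$ and $(\lambda_1/\lambda_2)B$ acquire the common factor $-q/(\lambda_1^3b_1b_2b_3)$ (here $a_1a_2a_3=q/b_1b_2b_3$ is used), while $\theta(\lambda_1/\lambda_2)$ contributes $-\lambda_2/\lambda_1$. The apparent poles at $\lambda_1\in\lambda_2q^{\mathbb Z}$ are removable, since at $\lambda_1=\lambda_2$ one has $A=(\lambda_1/\lambda_2)B$ so the braced expression vanishes, and the matching quasi-periodicities propagate this along the whole sequence. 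Hence both sides are holomorphic theta functions of order $2$ in $\lambda_1$, lying in a two-dimensional space.

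Next I would produce enough base points from Gasper's identity \eqref{gci}. Rewriting $(qz/b_3)_\infty=\theta(qz/b_3)/(b_3/z)_\infty$ via $\theta(x)=(x)_\infty(q/x)_\infty$ turns the integrand of \eqref{gci} into that of \eqref{ggi} with $(\lambda_1,\lambda_2,\lambda_3)=(\lambda,q/\lambda b_1b_2,q/b_3)$, and the same rewriting reassembles $\prod_j(q/b_3a_j)_\infty$ and $\prod_j(b_1a_j,b_2a_j)_\infty$ into the full product $\prod_{i,j}(a_ib_j)_\infty$ of \eqref{ggi}. Thus \eqref{gci} is precisely the case of \eqref{ggi} in which one of the $\lambda_j$ equals $q/b_3$, provided one checks that the right-hand sides agree there, i.e. that $\{A-(\lambda_1/\lambda_2)B\}/\theta(\lambda_1/\lambda_2)$ reduces to $\theta(b_1\lambda,b_2\lambda)\prod_{j=1}^3\theta(a_jb_3)$. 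Since the left-hand side of \eqref{ggi} is symmetric in $b_1,b_2,b_3$, the same validity holds whenever some $\lambda_j$ equals $q/b_1$, $q/b_2$ or $q/b_3$.

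Finally, on the slice where $\lambda_2$ is fixed, the conditions $\lambda_1\in\{q/b_1,q/b_2,q/b_3\}$ together with $\lambda_3\in\{q/b_1,q/b_2,q/b_3\}$ (the latter giving $\lambda_1\in\{q/b_2b_3\lambda_2,q/b_1b_3\lambda_2,q/b_1b_2\lambda_2\}$) yield six values of $\lambda_1$ that, for generic parameters, are pairwise distinct modulo $q^{\mathbb Z}$. The difference of the two sides is a theta function of order $2$, hence has at most two zeros in a fundamental domain; vanishing at three or more of these base points forces it to vanish identically. As $\lambda_2$ and the remaining parameters are arbitrary, \eqref{ggi} follows.

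The one genuinely computational step, and the main obstacle, is the base-point verification that Gasper's right-hand side equals the braced theta expression in \eqref{ggi}. This is a pure theta-function identity of the same flavour as \eqref{ftt}, and I would establish it by a single application of the Weierstrass three-term relation (equivalently \eqref{ftt} with $\mu$ and the $b_j$ suitably specialized), using the balancing relations \eqref{aba} and \eqref{lab} to match arguments. An alternative route, which sidesteps this verification, is to obtain \eqref{ggi} as the $c\to 0$ degeneration of \eqref{nsj} under $(b_1,\dots,b_6)\mapsto(a_1/c,a_2/c,a_3/c,b_1c,b_2c,b_3c)$, $z\mapsto cz$ with a compatible scaling of the measure parameters; there the closed form for $C$ should pass directly into the right-hand side of \eqref{ggi}, but controlling the renormalization of the theta factors that blow up is itself delicate.
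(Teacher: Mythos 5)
Your framework is sound and is essentially the paper's own method: both arguments restrict to a one-parameter slice of \eqref{lab}, show that the two sides of \eqref{ggi} are holomorphic on that slice and obey the same quasi-periodicity, and then match them at finitely many base points supplied by Gasper's identity \eqref{gci}. The difference is the choice of slice: the paper sets $\lambda_1=\lambda x$, $\lambda_2=\lambda/x$ with $\lambda_3$ fixed and exploits the antisymmetry under $x\mapsto 1/x$ (Lemma \ref{al}), so that two base points suffice, whereas you fix $\lambda_2$ and let $\lambda_3=q^2/(b_1b_2b_3\lambda_1\lambda_2)$ float, getting a two-dimensional space of theta functions of order two in $\lambda_1$ and hence needing three base points. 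Your computations of the multipliers, of the removability of the poles along $\lambda_1\in\lambda_2q^{\mathbb Z}$, and of the zero count are all correct.

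The gap is the step you yourself call the main obstacle, and as written it is a real one. You formulate the base-point check in the configuration $(\lambda_1,\lambda_2,\lambda_3)=(\lambda,q/\lambda b_1b_2,q/b_3)$, i.e.\ with the specialized parameter being $\lambda_3$, the one absent from the right-hand side of \eqref{ggi}. In that configuration the required equality of right-hand sides is a genuine three-term theta identity (after cancelling $\theta(\lambda_1b_1,\lambda_1b_2)$ it relates three products of four theta functions), and it is \emph{not} a single application of the Weierstrass relation: any attempt to match $\theta(\lambda_1/a_1,\lambda_1/a_2,\lambda_1/a_3,\lambda_1b_1b_2/b_3)$ to the Weierstrass pattern $\theta(xw^{\pm},yz^{\pm})$ produces cross terms such as $xz=\lambda_1b_3/q^{1/2}$, off by $q^{1/2}$ from the arguments you need, so this step is left both unexecuted and unsubstantiated. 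Moreover, your appeal to symmetry in $b_1,b_2,b_3$ only transports this unproven case to $\lambda_3=q/b_1$ and $\lambda_3=q/b_2$; it cannot produce the cases $\lambda_1=q/b_k$, because the right-hand side of \eqref{ggi} is not manifestly symmetric under $\lambda_1\leftrightarrow\lambda_3$ (that symmetry is part of what is being proved). The repair is immediate, and it is exactly the paper's device: take as your three base points $\lambda_1\in\{q/b_1,q/b_2,q/b_3\}$, which are pairwise distinct modulo $q^{\mathbb Z}$ for generic $b_j$. At $\lambda_1=q/b_k$ the second product in the braces contains $\theta(qb_k/b_k)=\theta(q)=0$, so only the first term survives; since $\theta(q/x)=\theta(x)$, one has $\prod_j\theta(\lambda_1/a_j)=\prod_j\theta(a_jb_k)$ and $\theta(\lambda_1/\lambda_2)=\theta(b_k\lambda_2)$, which cancels the factor $\theta(\lambda_2b_k)$ in the numerator, leaving precisely Gasper's right-hand side with no theta identity needed at all. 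With that substitution your argument closes.
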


To prove Proposition \ref{sgl} we will use \eqref{gci} and the following  fact. 
The proof is a standard elliptic function argument that we include
for completeness. 

\begin{lemma}\label{al}
Let $f$ be an analytic function on $\mathbb C\setminus\{0\}$ that satisfies 
\begin{equation}\label{ct}f(qx)=f(x)/q^3x^6,\qquad f(1/x)=-f(x).\end{equation}
 Moreover, let $c_1$ and $c_2$ be numbers such that $c_1^2,\,c_2^2,c_1c_2,c_1/c_2\notin q^{\mathbb Z}$. Then, $f$ is uniquely determined by the values $f(c_1)$ and $f(c_2)$.
\end{lemma}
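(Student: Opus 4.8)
The plan is to read \eqref{ct} as saying that $f$ is a theta function of order six on the elliptic curve $E=\mathbb{C}^{*}/q^{\mathbb{Z}}$ and then to count zeros. Since both conditions in \eqref{ct} are linear in $f$, the uniqueness assertion is equivalent to a vanishing statement: if $f$ satisfies \eqref{ct} and in addition $f(c_1)=f(c_2)=0$, then $f\equiv 0$. Indeed, applying this to the difference of two solutions that agree at $c_1$ and $c_2$ shows that the values $f(c_1),f(c_2)$ determine $f$. So I would fix such an $f$ and aim to derive $f\equiv 0$.

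The next step is the basic zero count. The first relation in \eqref{ct}, namely $f(qx)=f(x)/q^3x^6$, is a quasi-periodicity with multiplier $q^{-3}x^{-6}$; comparing with $\theta(qx)=-\theta(x)/x$, the exponent $-6$ marks this as an ``order six'' multiplier. Applying the argument principle to $f'/f$ over the boundary of a fundamental annulus $\{1\le|x|<|q|^{-1}\}$, where the two boundary circles are related by the quasi-periodicity, shows that a nonzero $f$ of this type has exactly six zeros modulo $q^{\mathbb{Z}}$, counted with multiplicity. I would invoke this as the standard fact it is, rather than reproving it in detail.

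The heart of the matter is to manufacture forced zeros from the involution $x\mapsto 1/x$, which acts on $E$ with four fixed points $\pm 1,\pm q^{1/2}$. At $x_0=\pm 1$ the second relation in \eqref{ct} gives $f(x_0)=f(1/x_0)=-f(x_0)$, hence $f(\pm1)=0$. At $x_0=\pm q^{1/2}$ one must combine both relations: from the first relation alone one computes $f(q^{-1/2})=f(q^{1/2})$ (and likewise with $-q^{1/2}$), while the second gives $f(q^{-1/2})=f(1/q^{1/2})=-f(q^{1/2})$, so $f(\pm q^{1/2})=0$. Thus every solution of \eqref{ct} vanishes at all four half-period points, and moreover its zeros occur in reciprocal pairs $\{a,1/a\}$ because $f(a)=0$ forces $f(1/a)=0$.

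Finally I would bring in $c_1,c_2$ and the genericity hypotheses. Assuming $f(c_1)=f(c_2)=0$, the function $f$ also vanishes at $1/c_1$ and $1/c_2$, so it has zeros at the eight points $\pm1,\pm q^{1/2},c_1,c_1^{-1},c_2,c_2^{-1}$. The conditions $c_1^2,c_2^2\notin q^{\mathbb{Z}}$ keep each $c_i$ off the half-period set and distinct from its own reciprocal, and $c_1c_2,c_1/c_2\notin q^{\mathbb{Z}}$ separate the two reciprocal pairs from each other; a short check confirms that all eight points are pairwise distinct modulo $q^{\mathbb{Z}}$. Eight distinct zeros contradict the order-six count, forcing $f\equiv 0$. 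I expect the only genuinely delicate step to be the forced vanishing at the half-periods $\pm q^{1/2}$, where both functional equations in \eqref{ct} have to be used together; the zero count is classical and the distinctness bookkeeping is routine.
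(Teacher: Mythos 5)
Your proof is correct, and it takes a genuinely different route from the paper's. The paper argues by explicit interpolation rather than by counting zeros: it introduces
$$g(x)=x^{-1}\theta(x^2)\left(\frac{\theta(c_2x^\pm)}{c_1^{-1}\theta(c_1^2,c_2c_1^\pm)}\,f(c_1)+\frac{\theta(c_1x^\pm)}{c_2^{-1}\theta(c_2^2,c_1c_2^\pm)}\,f(c_2)\right),$$
checks that $g$ satisfies \eqref{ct} and agrees with $f$ for $x^\pm\in c_1q^{\mathbb Z}$, and then shows that $h=(f-g)/\bigl(x^{-1}\theta(x^2,c_1x^\pm)\bigr)$ is analytic on $\mathbb C\setminus\{0\}$ and $q$-periodic, hence constant by Liouville's theorem; since $h(c_2)=0$, it follows that $f\equiv g$. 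One key ingredient is shared: the paper's observation that any solution of \eqref{ct} vanishes when $x^2\in q^{\mathbb Z}$ (needed so that $h$ has no poles) is exactly your forced vanishing at the four fixed points $\pm1,\pm q^{1/2}$ of the involution, propagated by quasi-periodicity. Beyond that, the architectures differ: you reduce uniqueness to a vanishing statement by linearity and rule out a nonzero solution by exhibiting eight pairwise inequivalent zeros ($\pm1,\pm q^{1/2},c_1^{\pm1},c_2^{\pm1}$) against the six permitted by the order of the multiplier, invoking the argument-principle count as a standard fact, whereas the paper never counts zeros at all. Your route is shorter, needs no inspired guess of an interpolation kernel, and generalizes mechanically to multipliers of other orders; its only outsourced step is the zero count (whose careful statement requires choosing the fundamental annulus so that no zeros lie on its boundary -- routine). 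The paper's route is self-contained modulo standard theta identities and delivers strictly more than uniqueness: an explicit reconstruction of $f$ from $f(c_1)$ and $f(c_2)$, identifying the solution space of \eqref{ct} as the two-dimensional span of explicit theta products. For the purpose the lemma serves in the paper (verifying \eqref{ggi} at two values of $\lambda_1$ in the proof of Proposition \ref{sgl}), uniqueness alone is what is used, so either proof would do.
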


\begin{proof}
Let
$$g(x)=x^{-1}\theta(x^2)\left(\frac{\theta(c_2x^\pm)}{c_1^{-1}\theta(c_1^2,c_2c_1^\pm)}\,f(c_1)+\frac{\theta(c_1x^\pm)}{c_2^{-1}\theta(c_2^2,c_1c_2^\pm)}\,f(c_2)\right). $$
We claim that $f(x)=g(x)$ for all $x$.
This is clearly true for $x=c_1$. Since 
$g$ satisfies \eqref{ct},
it is more generally  true for $x^\pm \in c_1 q^{\mathbb Z}$. We also observe that
\eqref{ct} implies that $f$ vanishes for $x^2\in q^{\mathbb Z}$. Hence, the function
$$h(x)=\frac{f(x)-g(x)}{x^{-1}\theta(x^2,c_1x^\pm)} $$ 
is analytic for $x\neq 0$ and satisfies $h(qx)=h(x)$. By Liouville's theorem, $h$ is constant. Since $h(c_2)=0$, we arrive at the desired conclusion.
\end{proof}

\begin{proof}[Proof of \emph{Proposition \ref{sgl}}] We
substitute $\lambda_1=\lambda x$ and $\lambda_2=\lambda/x$ in \eqref{ggi}
and multiply the identity by $x^{-1}\theta(x^2)$. Consider both sides as functions of $x$.
It is easy to see that they satisfy \eqref{ct} (as the contour of integration can be chosen independently of $x$, it is enough to consider the integrand). By Lemma \ref{al}, it then suffices to verify \eqref{ggi} for two generic values of $\lambda_1$. Choosing 
  $\lambda_1=q/b_3$, \eqref{ggi} is reduced to \eqref{gci}. By symmetry, we may take $\lambda_1=q/b_2$ as the other value.
\end{proof}

Let us now replace $a_j$ and $b_j$  in \eqref{ggi} by $a_jq^{k_j}$ and $b_jq^{l_j}$, where $k_j$ and $l_j$ are integers subject to \eqref{klb}. In order to respect \eqref{lab} we also replace $\lambda_3$ by $\lambda_3q^T$.
The resulting identity can be expressed as
\begin{multline}\mathbf K(g)=\frac {(q)_\infty\theta(\lambda_1/\lambda_2)\prod_{i,j=1}^3(a_ib_j)_\infty}{\prod_{j=1}^3\theta(\lambda_1/a_j,\lambda_2b_j)-\frac{\lambda_1}{\lambda_2}\prod_{j=1}^3\theta(\lambda_1b_j,\lambda_2/a_j)}\\
\label{ki}\times\oint g(z)\prod_{j=1}^3\frac{\theta(\lambda_jz)}{(a_jz,b_j/z)_\infty}\frac{dz}{2\pi\ti z},\end{multline}
where  $g$ is as in \eqref{g}. As these functions span $W$, we may draw the following conclusion.

\begin{proposition}\label{cgp}
The functional $\mathbf K$ is given by the integral formula \eqref{ki}, where $\lambda_j$ are generic parameters subject to \eqref{lab}.
\end{proposition}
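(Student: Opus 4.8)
The plan is to run exactly the same argument that produced Propositions establishing $\mathbf J$ from Rahman's integral \eqref{ri}, now applied in the degenerate setting. Since $\mathbf K$ was defined through its action on the spanning functions $g$ of $W$ by the explicit formula \eqref{kds}, and since these $g$ span $W$, it suffices to verify that the right-hand side of \eqref{ki}, when evaluated at $g$, reproduces \eqref{kds}. The natural strategy is to start from the one-parameter Gasper identity \eqref{ggi} of Proposition \ref{sgl} and specialize its parameters by shifting $a_j \mapsto a_j q^{k_j}$, $b_j \mapsto b_j q^{l_j}$, with $k_j, l_j$ integers subject to \eqref{klb}, while simultaneously adjusting $\lambda_3 \mapsto \lambda_3 q^{T}$ so that the balancing condition \eqref{lab} is preserved (here $T = k_1+k_2+k_3$ as in \eqref{klb}).

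First I would track how each factor in \eqref{ggi} transforms under these shifts. In the integrand, the shift $a_j \mapsto a_j q^{k_j}$ turns $1/(a_j z)_\infty$ into $(a_j z)_{k_j}/(a_j z)_\infty$, and similarly $b_j \mapsto b_j q^{l_j}$ produces a factor $(b_j/z)_{l_j}$; the theta factor $\theta(\lambda_j z)$ with the shift of $\lambda_3$ contributes, via the quasi-periodicity \eqref{tqp}, a monomial prefactor in $z$ together with constants. Collecting the $z$-dependent factors, the integrand should become precisely $g(z) \prod_{j=1}^3 \theta(\lambda_j z)/\big((a_j z, b_j/z)_\infty\big)$ up to an overall constant, where $g$ is the function \eqref{g}; this identifies the integral in \eqref{ggi} with the integral appearing in \eqref{ki}. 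On the right-hand side of \eqml{ggi}, the shifts act on the theta products $\theta(\lambda_1/a_j, \lambda_2 b_j)$ and $\theta(\lambda_1 b_j, \lambda_2/a_j)$, again simplified by \eqref{tqp}, and on the shifted factorials $(a_i b_j)_\infty$, producing the factors $(a_i b_j)_{k_i+l_j}$ together with explicit powers of $q$, $a_j$, $b_j$.

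The bookkeeping step is then to check that after dividing by the $z$-independent constant (the denominator factor $\prod_{j=1}^3\theta(\lambda_1/a_j,\lambda_2 b_j) - (\lambda_1/\lambda_2)\prod_{j=1}^3\theta(\lambda_1 b_j, \lambda_2/a_j)$ times $(q)_\infty\theta(\lambda_1/\lambda_2)\prod_{i,j}(a_ib_j)_\infty$), the surviving constant equals the right-hand side of \eqref{kds}, namely $(-t)^T q^{\binom T2}\prod_{i,j=1}^3(a_ib_j)_{k_i+l_j}/\prod_{j=1}^3 q^{\binom{k_j}2+\binom{l_j}2}a_j^{k_j}b_j^{l_j}$. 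Here I expect the various theta-quasiperiodicity prefactors coming from the $\lambda$-shifts in the numerator and denominator to cancel neatly, since $\lambda_1/\lambda_2$ is shift-invariant under the simultaneous adjustment of $\lambda_3$; what remains are the powers of $t = a_1a_2a_3$ (recall \eqref{t}) and the binomial-in-$T$ factor, matching \eqref{kds}.

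The main obstacle is purely the constant-matching computation: keeping straight the signs and the half-integer powers $q^{\binom{k_j}2}$, $q^{\binom{l_j}2}$, $q^{\binom T2}$ that \eqref{tqp} generates, and confirming that the prefactors $(-1)^{\cdot}$ and powers of $\lambda_1,\lambda_2,\lambda_3$ arising from the theta shifts in numerator and denominator exactly cancel. This is entirely mechanical given \eqref{tqp}, but it is where an error would hide. One cross-check is available for free: the limiting derivation of \eqref{kds} from \eqref{id} already encodes the same prefactor $(-t)^Tq^{\binom T2}$, so the two computations must agree, and I would use that agreement as a consistency test. Once the constant is verified on the spanning functions $g$, linearity and the spanning property of $W$ give \eqref{ki} for all of $W$, completing the proof.
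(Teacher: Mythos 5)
Your proposal follows exactly the paper's argument: the paper likewise obtains \eqref{ki} by substituting $a_j\mapsto a_jq^{k_j}$, $b_j\mapsto b_jq^{l_j}$ (subject to \eqref{klb}) and $\lambda_3\mapsto\lambda_3q^{T}$ into Proposition \ref{sgl}'s identity \eqref{ggi}, identifying the resulting integrand with $g(z)$ times the unshifted measure, matching the constant against \eqref{kds} via \eqref{tqp}, and then invoking linearity and the fact that the functions \eqref{g} span $W$. The only difference is that you spell out the bookkeeping the paper leaves implicit; the substance is the same.
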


The special case corresponding to \eqref{gci} is
\begin{multline}\label{gk}\mathbf K(g)=\frac{(q)_\infty}{\theta(b_1\lambda,b_2\lambda)}\prod_{j=1}^3\frac{(b_1a_j,b_2a_j)_\infty}{(q/b_3a_j)_\infty}\\
\times\oint g(z)\frac{(qz/b_3)_\infty\theta(\lambda z,qz/\lambda b_1b_2)}{(a_1z,a_2z,a_3z,b_1/z,b_2/z)_\infty}\frac{dz}{2\pi\ti z}. 
  \end{multline}
 By \eqref{kis}, we may alternatively write
 \begin{multline}\label{gkb}\mathbf K(g)=\frac{(q)_\infty}{\theta(\lambda/a_1,\lambda/a_2)}\prod_{j=1}^3\frac{(a_1b_j,a_2b_j)_\infty}{(q/a_3b_j)_\infty}\\
\times\oint g(z)\frac{(q/a_3z)_\infty\theta(\lambda z,qza_1a_2/\lambda)}{(a_1z,a_2z,b_1/z,b_2/z,b_3/z)_\infty}\frac{dz}{2\pi\ti z}. 
  \end{multline}
We also mention the further specialization $\lambda=a_3$ of \eqref{gk} (or $\lambda=q/b_3$ of \eqref{gkb}),
\begin{multline}\label{aik}\mathbf K(g)=\frac{(q)_\infty\prod_{i,j=1}^2(a_ib_j)_\infty}{(q/a_3b_3)_\infty\prod_{j=1}^2(q/a_3b_j,q/b_3a_j)_\infty}\\
\times\oint g(z)\frac{(qz/b_3,q/a_3z)_\infty\theta(a_1a_2b_3z)}{(a_1z,a_2z,b_1/z,b_2/z)_\infty}\frac{dz}{2\pi\ti z}. 
  \end{multline}
To obtain discrete integral formulas for $\mathbf K$, we replace $a_j$ by $a_jq^{k_j}$ and $b_j$ by $b_jq^{l_j}$ in \eqref{bsi}, where we still assume \eqref{klb}. This leads to the following result.

\begin{proposition}\label{dgp} For generic parameters $\lambda_j$,
the functional $\mathbf K$ can be expressed as
\begin{equation}\label{dk}\mathbf K(g)=C\sum_{k=1}^3X_k\sum_{x=-\infty}^\infty q^xg(\lambda_k q^x)\prod_{j=1}^3\frac{(a_j\lambda_k)_x}{(q\lambda_k/b_j)_x}, \end{equation}
where 
$$C=\frac {\lambda_1\lambda_2\lambda_3}{{(q)_\infty\theta(t\lambda_1\lambda_2\lambda_3)}\prod_{i,j=1}^3(q/a_ib_j)_\infty\prod_{1\leq i<j\leq 3}\lambda_j\theta(\lambda_i/\lambda_j)},$$ 
\begin{align*}
X_1&=\prod_{j=1}^3(q/a_j\lambda_1,q\lambda_1/b_j)_\infty\left\{
\frac{\lambda_2}{\lambda_3}\prod_{j=1}^3\theta(a_j\lambda_3,b_j/\lambda_2)-\frac{\lambda_3}{\lambda_2}\prod_{j=1}^3\theta(a_j\lambda_2,b_j/\lambda_3)\right\},\\
X_2&=\prod_{j=1}^3(q/a_j\lambda_2,q\lambda_2/b_j)_\infty\left\{
\frac{\lambda_3}{\lambda_1}\prod_{j=1}^3\theta(a_j\lambda_1,b_j/\lambda_3)-\frac{\lambda_1}{\lambda_3}\prod_{j=1}^3\theta(a_j\lambda_3,b_j/\lambda_1)\right\},\\
X_3&=\prod_{j=1}^3(q/a_j\lambda_3,q\lambda_3/b_j)_\infty\left\{
\frac{\lambda_1}{\lambda_2}\prod_{j=1}^3\theta(a_j\lambda_2,b_j/\lambda_1)-\frac{\lambda_2}{\lambda_1}\prod_{j=1}^3\theta(a_j\lambda_1,b_j/\lambda_2)\right\}.
 \end{align*} 
\end{proposition}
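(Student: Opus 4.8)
The plan is to mirror the derivations of the integral formulas \eqref{ki} and \eqref{di}: start from the determinant evaluation \eqref{bsi} and substitute $a_j\mapsto a_jq^{k_j}$, $b_j\mapsto b_jq^{l_j}$ with $k_j,l_j$ integers subject to \eqref{klb}, so that \eqref{aba} is preserved. Since $\mathbf K$ is linear and the functions $g$ of \eqref{g} span $W$, it suffices to check that the right-hand side of \eqref{dk} reproduces the value \eqref{kds} on each such $g$. Two structural observations drive the argument. First, the normalizing constant $C$ is exactly the reciprocal of the right-hand side of \eqref{bsi}, so that $C=1/\det_{1\leq i,j\leq 3}(f_j(\lambda_i))$. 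Second, expanding $\det_{1\leq i,j\leq 3}(f_j(\lambda_i))$ along the third column shows that $X_k$ is precisely the $f_3$-prefactor $\prod_{j=1}^3(q/a_j\lambda_k,q\lambda_k/b_j)_\infty$ times the signed $2\times 2$ minor of the $f_1,f_2$ columns over the rows $i\neq k$; a short check against the definitions of $f_1$ and $f_2$ confirms that these minors are exactly the bracketed theta-function factors in $X_1,X_2,X_3$.

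Next I would carry out the substitution column by column. Using the quasi-periodicity \eqref{tqp}, the first two columns scale as $f_1(\lambda_i)\mapsto A_1\lambda_i^{-T}f_1(\lambda_i)$ and $f_2(\lambda_i)\mapsto A_2\lambda_i^{-T}f_2(\lambda_i)$, where $A_1,A_2$ are constants collecting powers of $b_j,a_j,q$ and signs, and $T$ is as in \eqref{klb}. In the third column the prefactor picks up a ratio of shifted factorials while the ${}_3\psi_3$ acquires shifted parameters. Expanding the substituted determinant along the third column and pulling the factors $A_1,A_2,\lambda_i^{-T}$ out of the minors then expresses the left-hand side as a $k$-independent constant times $\sum_{k=1}^3 X_k$ multiplied by the shifted inner series.

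The crux is to identify that shifted inner series, after reindexing, with the sum $S_k=\sum_{x=-\infty}^\infty q^xg(\lambda_kq^x)\prod_{j=1}^3(a_j\lambda_k)_x/(q\lambda_k/b_j)_x$ appearing in \eqref{dk}. Inserting \eqref{g} and applying the shift rule $(aq^m)_n=(a)_{m+n}/(a)_m$ together with the reflection $(a)_n=(-a)^nq^{\binom{n}{2}}(q^{1-n}/a)_n$ turns $S_k$ into the shifted ${}_3\psi_3$ up to factors $\prod_j(a_j\lambda_k)_{k_j}/(q\lambda_k/b_j)_{-l_j}$. The one identity that makes everything collapse is $(q/a\lambda)_{-k}(a\lambda)_k=(-a\lambda)^kq^{\binom{k}{2}}$: it converts the residual $\lambda_k$-dependence of these factors, combined with the prefactor ratio, into a pure power $\lambda_k^{-T}$, which cancels the $\lambda_k^{T}$ produced by the column factorization and leaves a genuinely $k$-independent constant. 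I expect the main obstacle to be purely the bookkeeping: tracking all powers of $q$, $a_j$, $b_j$ and the signs $(-1)^T$ through \eqref{tqp} (applied both to $\theta(t\lambda_1\lambda_2\lambda_3)$ and to $\prod_{i,j=1}^3(q/a_ib_j)_\infty$ on the right-hand side of \eqref{bsi}) and verifying that the constant thereby produced matches \eqref{kds} exactly. A useful consistency check is the trivial case $k_j=l_j=T=0$, where $g\equiv 1$, the sum $S_k$ reduces to the very ${}_3\psi_3$ in $f_3(\lambda_k)$, and \eqref{dk} collapses to $\mathbf K(1)=1$ directly via \eqref{bsi}.
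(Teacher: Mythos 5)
Your proposal is correct and takes essentially the same route as the paper: the paper's (one-line) proof is precisely the substitution $a_j\mapsto a_jq^{k_j}$, $b_j\mapsto b_jq^{l_j}$ subject to \eqref{klb} in the bilateral $q$-Saalsch\"utz determinant \eqref{bsi}, read against the definition \eqref{kds} of $\mathbf K$ on the spanning functions \eqref{g}. Your cofactor expansion along the $f_3$-column, the identification of $C$ with the reciprocal of the right-hand side of \eqref{bsi} and of the $X_k$ with the signed minors, and the tracking of the $\lambda_k^{\pm T}$ cancellation simply make explicit the bookkeeping that the paper leaves to the reader.
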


Note that, in contrast to Proposition \ref{cgp},
  we are not assuming any balancing condition for the parameters $\lambda_j$. 
  
  In analogy with \eqref{dis}, assuming
   that $g$ has no poles at $z\in b_1q^{\mathbb Z_{<0}}$ or $b_2q^{\mathbb Z_{<0}}$,  we may choose $\lambda_1=b_1$ and $\lambda_2=b_2$ in \eqref{dk}. This leads to the expression
  \begin{multline}\label{grs}
  \mathbf K(g)=\frac{1}{b_1\theta(b_2/ b_1)\prod_{j=1}^3(q/a_jb_3)_\infty}\Bigg(b_1(qb_1/b_2,qb_1/b_3,a_1b_2,a_2b_2,a_3b_2)_\infty\\
  \times\sum_{x=0}^\infty
  q^xg(b_1q^x)\prod_{j=1}^3\frac{(a_jb_1)_x}{(qb_1/b_j)_x}-\text{idem}(b_1;b_2)\Bigg),
  \end{multline}
  which can alternatively be obtained from  \eqref{gk} by shrinking the contour to zero, picking up residues at the points $b_1q^{\mathbb Z_{\geq 0}}$ and $b_2q^{\mathbb Z_{\geq 0}}$. 
  If one instead expands the contour in \eqref{gkb}, or substitute $\lambda_1=1/a_1$, $\lambda_2=1/a_2$ in \eqref{dk}, one finds a similar
  identity involving values of $g$ at $a_jq^{\mathbb Z_{\leq 0}}$.

\subsection{Biorthogonal rational functions}
\label{grss}

Let us now consider the limit of Rahman's biorthogonal functions corresponding to the functional $\mathbf K$. There are two essentially different cases, depending on whether the distinguished parameters $b_5$ and $b_6$ in \eqref{rbr} tend to the same or distinct limits in $\{0,\infty\}$.

For the first case, we write
\begin{multline*}q_n(z;b_1,b_2,b_3,a_1,a_2,a_3;q)\\
\begin{split}&=\lim_{c\rightarrow 0} \,Q_n\left(\frac{(cz)+(cz)^{-1}}{2};b_1c,b_2c,b_3c,a_1/c,a_2/c,a_3/c;q\right) \\
&=(a_1b_1,1/a_3b_1)_n\,{}_4\phi_3\left(\begin{matrix}q^{-n},q/a_2b_2,q/a_2b_3,b_1/z\\ a_1b_1,q^{1-n}a_3b_1,q/a_2z\end{matrix};q\right).
\end{split}\end{multline*}
This function is symmetric in the parameters $(b_1,b_2,b_3)$. Though we label $q_n$ by seven parameters (counting $q$),  it is effectively a five-parameter system since  \eqref{aba} is assumed and we may scale $z$, $a_j^{-1}$ and $b_j$ by a fixed constant.
If $r_n$ denotes the function $q_n$ with $a_2$ and  $a_3$ interchanged, then it follows from \eqref{rbr} that
$$\mathbf K(q_mr_n)=\delta_{mn}\,q^{-n}(q,a_1b_1,a_1b_2,a_1b_3)_n.  $$

Applying \eqref{kis}, we also find that if
\begin{align*}\tilde q_n(z)&=q_n(1/z;a_1,a_2,a_3,b_1,b_2,b_3;q)\\
&=(a_1b_1,1/a_1b_3)_n\,{}_4\phi_3\left(\begin{matrix}q^{-n},q/a_2b_2,q/a_3b_2,a_1z\\ a_1b_1,q^{1-n}a_1b_3,qz/b_2\end{matrix};q\right)
\end{align*}
and $\tilde r_n$ denotes  $\tilde q_n$ with $b_2$ and $b_3$ interchanged, then
$$\mathbf K(\tilde q_m\tilde r_n)=\delta_{mn}\,q^{-n}(q,a_1b_1,a_2b_1,a_3b_1)_n.  $$
For reasons that will be clear in \S \ref{sis} we think of this as a separate system, even though it is equivalent to $(q_n,r_n)$ by a change of parameters.

 The
biorthogonal system $(q_n,r_n)$ appears in the scheme of  van de Bult and Rains \cite{br} with the label
$3100v2$. The biorthogonality relations described there correspond to \eqref{gk} and \eqref{grs}, 
 possibly after permuting the parameters and applying the symmetry \eqref{kis}.
Our expressions \eqref{ki} and \eqref{dk} unify and generalize these relations. 
Just as for Rahman's functions, in the continuous case the contour of integration depends on $m$ and $n$, whereas
the discrete measures are fixed.

For the second case, we write
\begin{multline*}s_n(z;b_1,b_2,b_3,a_1,a_2,a_3;q)\\
\begin{split}&=\lim_{c\rightarrow 0}(-1)^nq^{-\binom n2}c^{2n} Q_n\left(\frac{(cz)+(cz)^{-1}}{2};b_1c,b_2c,a_1/c,a_2/c,a_3/c,b_3c;q\right) \\
&=\frac{(a_1b_1,a_2b_1)_n}{(b_1b_3)^n}\,{}_4\phi_3\left(\begin{matrix}q^{-n},q/a_3b_2,q^n/a_3b_3,b_1/z\\ a_1b_1,a_2b_1,q/a_3z\end{matrix};q\right),
\end{split}\end{multline*}
\begin{multline*}t_n(z;b_1,b_2,b_3,a_1,a_2,a_3;q)\\
\begin{split}&=\lim_{c\rightarrow 0} Q_n\left(\frac{(cz)+(cz)^{-1}}{2};a_1/c,b_1c,b_2c,a_2/c,b_3c,a_3/c;q\right) \\
&=(q^{-1}a_2b_3)^n{(a_1b_1,a_1b_2)_n}\,{}_4\phi_3\left(\begin{matrix}q^{-n},q/a_2b_3,q^n/a_3b_3,a_1z\\ a_1b_1,a_1b_2,qz/b_3\end{matrix};q\right).
\end{split}\end{multline*}
Again, these are effectively five-parameter systems. Both
 $s_n$ and $t_n$ are symmetric in the pairs $(a_1,a_2)$ and $(b_1,b_2)$. We obtain from \eqref{rbr} the biorthogonality 
\begin{equation}\label{stb}\mathbf K(s_mt_n)=\delta_{mn}\,(q^{-1}a_1a_2)^n\frac{1-q^{-1}a_1a_2b_1b_2}{1-q^{2n-1}a_1a_2b_1b_2}\frac{(q)_n\prod_{i,j=1}^2(a_ib_j)_n}{(q^{-1}a_1a_2b_1b_2)_n}.  \end{equation}
In this case, it is natural to use the expression \eqref{aik} for $\mathbf K$, as the poles of $s_m$ and $t_n$ are then 
situated at zeroes of the biorthogonality measure, so that  one may use the same contour of integration  for all $m$ and $n$.
With the biorthogonality written in this form, \eqref{stb} is due to
Al-Salam and Ismail \cite{ai}.
In the classification of \cite{br}, the system $(s_n,t_n)$ is denoted $2200vv$
(in Appendix A.2 it appears with a typo as $2200vp$). The expressions  \eqref{ki} and \eqref{dk} for $\mathbf K$ generalize the 
 biorthogonality measures found in \cite{ai} and \cite{br}.

\section{Superconformal indices}
\label{sis}

As was discussed in the introduction, when considering integral evaluations related to dualities for three-dimensional supersymmetric quantum field theories,  \eqref{gi} appears as a
top level beta integral. 
As before, the integration is over a positively oriented contour separating the geometric sequences of poles tending to $0$ from those tending to $\infty$; note that the contour necessarily depends on $x$.
Besides the quantum field theory interpretation, \eqref{gi} also appears as
the star-triangle relation for a solvable lattice model introduced in \cite{gs}. 

Let us  replace $b_j$ in \eqref{gi} by $b_jq^{k_j}$ and $N_j$ by $N_j+l_j-k_j$, where $k_j$ and $l_j$ are integers with $k_1+\dots+k_6=l_1+\dots+l_6=0$. 
After simplification, this results in the identity
\begin{multline*}\sum_{x=-\infty}^\infty\oint \frac{(1-q^x z^2)(1-q^xz^{-2})}{q^xz^{6x}}\prod_{j=1}^6\frac{(q^{1+x/2}/b_jz,q^{1-x/2}z/b_j)_\infty}{(q^{N_j+x/2}b_jz,q^{N_j-x/2}b_j/z)_\infty}\\
\times\prod_{j=1}^6{(b_j(q^{-x/2}z)^{\pm})_{k_j}(b_jq^{N_j}(q^{x/2}z)^\pm)_{l_j}}\, \frac{dz}{2\pi\ti z}\\
=\frac 2{\prod_{j=1}^6q^{\binom{k_j}2+\binom{l_j}2+\binom{N_j}2+N_jl_j}b_j^{k_j+l_j+N_j}}
\prod_{1\leq i<j\leq 6}
\frac{(q/b_ib_j)_\infty(b_ib_j)_{k_i+k_j}(b_ib_jq^{N_i+N_j})_{l_i+l_j}}{(b_ib_jq^{N_i+N_j})_\infty}.
\end{multline*}
This can be interpreted in terms of the functional \eqref{id}. Let $V$ be the space introduced in \S \ref{rfs} and $V'$ the space obtained from $V$ through replacing the parameters $b_j$ with $b_jq^{N_j}$.
Denoting the Rahman functional on $V'$ by $\mathbf J'$, we obtain the following result.

\begin{proposition}\label{dpp} For $f\in V$ and $g\in V'$,
\begin{multline}\label{dc}\sum_{x=-\infty}^\infty\oint\frac{(1-q^x z^2)(1-q^xz^{-2})}{q^xz^{6x}}\prod_{j=1}^6\frac{(q^{1+x/2}/b_jz,q^{1-x/2}z/b_j)_\infty}{(q^{N_j+x/2}b_jz,q^{N_j-x/2}b_j/z)_\infty}\\
\times 
f\left(\frac{q^{-x/2}z+q^{x/2}z^{-1}}2\right)g\left(\frac{q^{x/2}z+q^{-x/2}z^{-1}}2\right)\frac{dz}{2\pi\ti z}\\
=\frac 2{\prod_{j=1}^6q^{\binom{N_j}2} b_j^{N_j}}\prod_{1\leq i<j\leq 6}
\frac{(q/b_ib_j)_\infty}{(b_ib_jq^{N_i+N_j})_\infty}\,\mathbf J(f)\mathbf J'(g).
\end{multline}
\end{proposition}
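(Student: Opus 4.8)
The plan is to derive \eqref{dc} from the displayed identity that immediately precedes the proposition, by combining bilinearity with the spanning property of Lemma \ref{sl}. Both sides of \eqref{dc} are bilinear in $(f,g)\in V\times V'$: on the left this is visible from the integrand, and on the right it follows from the linearity of $\mathbf J$ and $\mathbf J'$. Since, by Lemma \ref{sl}, the space $V$ is spanned by the functions $\prod_{j=1}^6(b_jz^\pm)_{k_j}$ with $k_1+\dots+k_6=0$, and $V'$---obtained by replacing each $b_j$ with $b_jq^{N_j}$---is spanned by $\prod_{j=1}^6(b_jq^{N_j}z^\pm)_{l_j}$ with $l_1+\dots+l_6=0$, it suffices to establish \eqref{dc} when $f$ and $g$ are these spanning functions.

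For such $f$ and $g$ I would first identify the integrand. Writing $f((w+w^{-1})/2)=\prod_{j=1}^6(b_jw^\pm)_{k_j}$ and evaluating at $w=q^{-x/2}z$, and $g((w+w^{-1})/2)=\prod_{j=1}^6(b_jq^{N_j}w^\pm)_{l_j}$ evaluated at $w=q^{x/2}z$, the product $f(\cdots)g(\cdots)$ appearing in \eqref{dc} becomes exactly $\prod_{j=1}^6(b_j(q^{-x/2}z)^\pm)_{k_j}(b_jq^{N_j}(q^{x/2}z)^\pm)_{l_j}$. Thus the left-hand side of \eqref{dc} coincides with the left-hand side of the displayed identity, which is \eqref{gi} after the substitutions $b_j\mapsto b_jq^{k_j}$ and $N_j\mapsto N_j+l_j-k_j$ and is therefore valid. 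On the right-hand side, \eqref{id} gives $\mathbf J(f)=\prod_{1\leq i<j\leq6}(b_ib_j)_{k_i+k_j}/\prod_{j=1}^6q^{\binom{k_j}2}b_j^{k_j}$, while the same formula applied on $V'$ yields $\mathbf J'(g)=\prod_{1\leq i<j\leq6}(b_ib_jq^{N_i+N_j})_{l_i+l_j}/\prod_{j=1}^6q^{\binom{l_j}2+N_jl_j}b_j^{l_j}$.

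It then remains to substitute these two evaluations into the right-hand side of \eqref{dc} and verify that the product of the stated prefactor with $\mathbf J(f)\mathbf J'(g)$ reproduces the right-hand side of the displayed identity; this comes down to collecting the scalar factors $q^{\binom{N_j}2}$, $q^{N_jl_j}$ and $b_j^{N_j}$ that arise from the shift by $q^{N_j}$ via the quasi-periodicity \eqref{tqp}, after which the two sides agree term by term. I expect this prefactor bookkeeping to be the only real, though entirely routine, obstacle. Finally, to pass from spanning functions back to arbitrary $f\in V$ and $g\in V'$, one uses that each such function is a finite linear combination of spanning functions, so the extension is mere linearity; as with \eqref{ii}, the only point to keep in mind is that the contour in \eqref{dc} must be chosen, for the given $f$ and $g$, to enclose the relevant poles, which is possible simultaneously for the finitely many spanning functions involved by the genericity assumption \eqref{gc}.
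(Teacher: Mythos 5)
Your proof follows essentially the same route as the paper: the paper substitutes $b_j\mapsto b_jq^{k_j}$, $N_j\mapsto N_j+l_j-k_j$ into \eqref{gi}, simplifies, and then invokes Lemma \ref{sl} together with the evaluation \eqref{id} to identify the result as \eqref{dc}, which is exactly your argument presented in the reverse (reduction-first) order. The only quibble is that the ``routine bookkeeping'' uses $q$-shifted factorial identities rather than the theta quasi-periodicity \eqref{tqp}, but this does not affect the correctness of the argument.
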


We find it remarkable that the right-hand side of \eqref{dc} factors, even though the 
variables of $f$ and $g$ on the left are coupled. We refer to this
as the \emph{decoupling phenomenon}.
Just as for Spiridonov's beta integral discussed in the introduction, it can be used to construct two-index biorthogonal systems.
To this end, let $Q_n$ and $R_n$ be as in \eqref{rqr}. Moreover,  if $(c_1,\dots,c_6)$ is any permutation of $(b_1q^{N_1},\dots,b_6q^{N_6})$, let
\begin{align*}
Q_n'&=Q_n\left(\frac{z+z^{-1}}2;c_1,c_2,c_3,c_4,c_5,c_6\right),\\
R_n'&=Q_n\left(\frac{z+z^{-1}}2;c_1,c_2,c_3,c_4,c_6,c_5\right).
\end{align*}
Combining \eqref{rbr} and Proposition \ref{dpp} then gives the following result.

\begin{corollary}\label{bpc}
  In the notation above,
\begin{multline*}\sum_{x=-\infty}^\infty\oint\frac{(1-q^x z^2)(1-q^xz^{-2})}{q^xz^{6x}}\prod_{j=1}^6\frac{(q^{1+x/2}/b_jz,q^{1-x/2}z/b_j)_\infty}{(q^{N_j+x/2}b_jz,q^{N_j-x/2}b_j/z)_\infty}\\
\times 
(Q_{n_1}R_{m_1})\left(\frac{q^{-x/2}z+q^{x/2}z^{-1}}2\right)(Q_{n_2}'R_{m_2}')\left(\frac{q^{x/2}z+q^{-x/2}z^{-1}}2\right)\frac{dz}{2\pi\ti z}\\
=\delta_{n_1m_1}\delta_{n_2m_2}\frac 2{\prod_{j=1}^6q^{\binom{N_j}2} b_j^{N_j}}\prod_{1\leq i<j\leq 6}
\frac{(q/b_ib_j)_\infty}{(b_ib_jq^{N_i+N_j})_\infty}\\
\times\frac{1-1/b_5b_6}{1-q^{2n_1}/b_5b_6}\frac{(q)_{n_1}\prod_{1\leq i<j\leq 4}(b_ib_j)_{n_1}}{q^{n_1}(1/b_5b_6)_{n_1}}
\frac{1-1/c_5c_6}{1-q^{2n_2}/c_5c_6}\frac{(q)_{n_2}\prod_{1\leq i<j\leq 4}(c_ic_j)_{n_2}}{q^{n_2}(1/c_5c_6)_{n_2}}
.
\end{multline*}
\end{corollary}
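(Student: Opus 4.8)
The plan is to deduce Corollary \ref{bpc} directly from Proposition \ref{dpp} by specializing the arbitrary elements $f\in V$ and $g\in V'$ to Rahman's biorthogonal functions and then invoking the biorthogonality relations already recorded in \eqref{rbr}. The key observation is that $Q_{n_1}R_{m_1}$ is a product of two elements of $V$, hence itself lies in $V$ (since $V$ is closed under products that do not worsen the pole structure at the allowed points \eqref{vpp}, and both factors are built from the same parameters $b_1,\dots,b_6$), while $Q_{n_2}'R_{m_2}'$ is the analogous product built from the shifted parameters $c_1,\dots,c_6$, a permutation of $b_jq^{N_j}$, and so lies in $V'$. Thus the integrand on the left-hand side of Corollary \ref{bpc} is exactly the integrand of \eqref{dc} with the choices $f=Q_{n_1}R_{m_1}$ and $g=Q_{n_2}'R_{m_2}'$.

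First I would verify that these substitutions are legitimate, i.e.\ that $f=Q_{n_1}R_{m_1}$ genuinely belongs to $V$ and $g=Q_{n_2}'R_{m_2}'$ to $V'$. Here one must check that the product of $Q_{n_1}$ and $R_{m_1}$ has poles only at the permitted locations and remains regular at infinity; this is immediate from the definitions in \eqref{rqr}, since each $Q_n$ and $R_n$ is by construction an element of $V$ (indeed of the subspace $V_n^5$ or $V_n^6$), and $V$ is an algebra of rational functions in $(z+z^{-1})/2$ whose members are regular at infinity. Once this is confirmed, applying Proposition \ref{dpp} converts the entire left-hand side into the prefactor $2\big/\prod_{j=1}^6 q^{\binom{N_j}2}b_j^{N_j}\cdot\prod_{i<j}(q/b_ib_j)_\infty/(b_ib_jq^{N_i+N_j})_\infty$ times the product $\mathbf J(Q_{n_1}R_{m_1})\,\mathbf J'(Q_{n_2}'R_{m_2}')$.

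It then remains to evaluate the two functional values using \eqref{rbr}. For $\mathbf J(Q_{n_1}R_{m_1})$ we apply \eqref{rbr} with the roles of $m,n$ played by $m_1,n_1$ and with the distinguished parameters $b_5,b_6$; this yields $\delta_{n_1m_1}$ times the explicit ratio of $q$-shifted factorials in which $b_1b_2b_3b_4$ appears, and using the balancing condition \eqref{b} one rewrites $q^{-1}b_1b_2b_3b_4=1/b_5b_6$ to match the form stated in the corollary. For $\mathbf J'(Q_{n_2}'R_{m_2}')$ one applies the same relation \eqref{rbr}, but now for the functional $\mathbf J'$ attached to the parameters $c_1,\dots,c_6$; since $(c_1,\dots,c_6)$ is a permutation of $(b_jq^{N_j})$ and $Q_n$ is symmetric in its first four parameters, the biorthogonality again produces $\delta_{n_2m_2}$ times the corresponding ratio with $b_1b_2b_3b_4$ replaced by $c_1c_2c_3c_4$ and with $q^{-1}c_1c_2c_3c_4=1/c_5c_6$. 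Multiplying the two evaluations together with the prefactor gives precisely the right-hand side of the corollary.

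The only genuine subtlety I anticipate is \textbf{bookkeeping of the balancing conditions and the permutation}. One must ensure that the parameters $c_j$ indeed satisfy the product relation inherited from \eqref{b} together with $N_1+\dots+N_6=0$, so that the shifted system $(Q_n',R_n')$ is itself a bona fide Rahman system to which \eqref{rbr} applies; this follows because $c_1\dotsm c_6=b_1\dotsm b_6\,q^{N_1+\dots+N_6}=q$. The remaining work is purely the translation $q^{-1}b_1b_2b_3b_4\mapsto 1/b_5b_6$ (and its primed analogue), which is routine given \eqref{b}, so I do not expect any further obstacle beyond careful matching of indices.
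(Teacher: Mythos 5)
Your proposal is correct and is essentially the paper's own proof: the paper obtains the corollary precisely by taking $f=Q_{n_1}R_{m_1}$ and $g=Q_{n_2}'R_{m_2}'$ in Proposition \ref{dpp} and then evaluating $\mathbf J(Q_{n_1}R_{m_1})$ and $\mathbf J'(Q_{n_2}'R_{m_2}')$ via the biorthogonality \eqref{rbr}, using $b_1\dotsm b_6=q$ (and $c_1\dotsm c_6=q$) to rewrite $q^{-1}b_1b_2b_3b_4=1/b_5b_6$ and $q^{-1}c_1c_2c_3c_4=1/c_5c_6$. One small correction to your justification of membership: $V$ is \emph{not} an algebra, since multiplying two elements with a pole at the same point of \eqref{vpp} produces a double pole; the product $Q_{n_1}R_{m_1}$ lies in $V$ because $Q_{n_1}\in V_{n_1}^5$ and $R_{m_1}\in V_{m_1}^6$ have their poles along the disjoint sequences $z^\pm\in b_5q^{\mathbb Z_{<0}}$ and $z^\pm\in b_6q^{\mathbb Z_{<0}}$ (disjoint by \eqref{gc}), so every pole of the product remains simple.
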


Up to permutation of the parameters, there are  three non-equivalent cases of Corollary \ref{bpc},  corresponding to whether
none, one or  both of the numbers $c_5,c_6$ are contained in $\{b_5q^{N_5},b_6q^{N_6}\}$.

%

At the Saalsch\"utz level, we have the superconformal index identity \cite{ga}
\begin{multline}\label{sp}\sum_{x=-\infty}^\infty\oint\prod_{j=1}^3\frac{(q^{1+x/2}/a_jz,q^{1-x/2}z/b_j)_\infty}{(q^{M_j+x/2}a_jz,q^{N_j-x/2}b_j/z)_\infty}\left(-\frac{q^{1/2}}{tz^3}\right)^x\frac{dz}{2\pi\ti z}\\
=\frac{1}{\prod_{j=1}^3q^{\binom{M_j}2+\binom{N_j}2}a_j^{M_j}b_j^{N_j}}\prod_{i,j=1}^3\frac{(q/a_ib_j)_\infty}{(a_ib_jq^{M_i+N_j})_\infty}, \end{multline}
where the parameters satisfy \eqref{aba}, $t$ is given by \eqref{t} and the integers $M_j$ and $N_j$ satisfy
 $M_1+M_2+M_3=N_1+N_2+N_3=0$. In \cite{ga} this is stated under the additional assumption  $t=q^{1/2}$, but
 the general case follows immediately after  rescaling $z$ and the parameters. 
  
 We replace the parameters in \eqref{sp} by $a_j\mapsto a_jq^{k_j}$, $b_j\mapsto b_jq^{l_j}$, $M_j\mapsto M_j+m_j-k_j$, $N_j\mapsto N_j+n_j-l_j$, where the integral shifts satisfy
  $$k_1+k_2+k_3=-l_1-l_2-l_3=m_1+m_2+m_3=-n_1-n_2-n_3=T. $$
 The resulting identity can be written
  \begin{multline*}\sum_{x=-\infty}^\infty\oint\prod_{j=1}^3\frac{(q^{1+x/2}/a_jz,q^{1-x/2}z/b_j)_\infty}{(q^{M_j+x/2}a_jz,q^{N_j-x/2}b_j/z)_\infty}\left(-\frac{q^{1/2}}{tz^3}\right)^x\\
  \times z^{-2T}\prod_{j=1}^3(q^{-x/2}a_jz)_{k_j}(q^{x/2}b_j/z)_{l_j}(q^{M_j+x/2}a_jz)_{m_j}(q^{N_j-x/2}b_j/z)_{n_j} \,\frac{dz}{2\pi\ti z}\\
=\frac{q^{2\binom T2} (t^2q^{M_1+M_2+M_3})^T}{\prod_{j=1}^3q^{\binom {k_j}2+\binom{l_j}2+\binom{m_j}2+\binom{n_j}2+\binom{M_j}2+\binom{N_j}2+m_jM_j+n_jN_j}a_j^{k_j+m_j+M_j}b_j^{l_j+n_j+N_j}}\\
\times\prod_{i,j=1}^3\frac{(q/a_ib_j)_\infty(a_ib_j)_{k_i+l_j}(a_ib_jq^{M_i+N_j})_{m_i+n_j}}{(a_ib_jq^{M_i+N_j})_\infty}. \end{multline*}
If we now let $\mathbf K$ and $W$ be the functional and space of rational functions introduced in \eqref{kds} and $\mathbf K'$, $W'$ the objects obtained from these  by replacing the parameters $a_j$ by $a_jq^{M_j}$ and $b_j$ by $b_jq^{N_j}$, we may draw the following conclusion.
  
  \begin{proposition}
 For $f\in W$ and $g\in W'$, 
  \begin{multline*}\sum_{x=-\infty}^\infty\oint\prod_{j=1}^3\frac{(q^{1+x/2}/a_jz,q^{1-x/2}z/b_j)_\infty}{(q^{M_j+x/2}a_jz,q^{N_j-x/2}b_j/z)_\infty}\left(-\frac{q^{1/2}}{tz^3}\right)^x\, f(q^{-x/2}z)\,g(q^{x/2}z)\,\frac{dz}{2\pi\ti z}\\
=\frac{1}{\prod_{j=1}^3q^{\binom{M_j}2+\binom{N_j}2}a_j^{M_j}b_j^{N_j}}\prod_{i,j=1}^3\frac{(q/a_ib_j)_\infty}{(a_ib_jq^{M_i+N_j})_\infty}\,\mathbf K(f)\mathbf K'(g). \end{multline*}
  \end{proposition}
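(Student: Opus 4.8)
The plan is to follow exactly the template already established for the proof of Proposition~\ref{dpp}, which is the Jackson-level analogue of this Saalsch\"utz-level statement. The starting point is the master identity \eqref{sp}, and the mechanism is to insert spanning functions into the integral by shifting the parameters and then recognize the factored right-hand side in terms of $\mathbf K$ and $\mathbf K'$. Concretely, I would first perform in \eqref{sp} the substitutions $a_j\mapsto a_jq^{k_j}$, $b_j\mapsto b_jq^{l_j}$, $M_j\mapsto M_j+m_j-k_j$, $N_j\mapsto N_j+n_j-l_j$, subject to the balancing constraints on the integer shifts displayed in the excerpt (so that \eqref{aba} and the conditions $M_1+M_2+M_3=N_1+N_2+N_3=0$ are preserved). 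This is the step where the shifted factorials $(q^{-x/2}a_jz)_{k_j}$, $(q^{x/2}b_j/z)_{l_j}$ and their $M_j,N_j$-shifted companions appear in the integrand, and where the right-hand side picks up the products $(a_ib_j)_{k_i+l_j}$ and $(a_ib_jq^{M_i+N_j})_{m_i+n_j}$ over the prefactor coming from the original $\mathbf K$-evaluation.

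Next I would identify the two clusters of shifted factorials in the integrand. The factors $z^{-T}\prod_{j=1}^3(q^{-x/2}a_jz)_{k_j}(q^{x/2}b_j/z)_{l_j}$ are, by \eqref{g}, precisely $f(q^{-x/2}z)$ for the spanning function $f\in W$ attached to the data $(k_1,k_2,k_3,l_1,l_2,l_3)$; similarly the factors built from $M_j,N_j,m_j,n_j$ assemble into $g(q^{x/2}z)$ for the corresponding spanning function $g\in W'$, where $W'$ is $W$ with $a_j,b_j$ replaced by $a_jq^{M_j},b_jq^{N_j}$. The decisive check here is that the powers of $z$ and of $q^{\pm x/2}$ conspire correctly: the prefactor $(-q^{1/2}/tz^3)^x$ together with the $z^{-2T}$ and the half-integer powers $q^{\pm x/2}$ must recombine so that $f$ is genuinely evaluated at $q^{-x/2}z$ and $g$ at $q^{x/2}z$ with no stray monomial left over. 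This bookkeeping is the analogue of the $q^{-x/2}z+q^{x/2}z^{-1}$ versus $q^{x/2}z+q^{-x/2}z^{-1}$ splitting in \eqref{dc}, but simpler because here the functions live on $W$ rather than on the symmetric space $V$.

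The final step is to match the right-hand side. Using the formula \eqref{kds} for $\mathbf K$ applied to $f$ (with data $k_j,l_j,T$) and the analogous formula for $\mathbf K'$ applied to $g$ (with data $m_j,n_j,T$ and shifted parameters), the products $\prod_{i,j}(a_ib_j)_{k_i+l_j}$ and $\prod_{i,j}(a_ib_jq^{M_i+N_j})_{m_i+n_j}$, together with the explicit sign, $q$-power and $a_j^{\bullet}b_j^{\bullet}$ prefactors, reassemble exactly into $\mathbf K(f)\,\mathbf K'(g)$ times the $(k,l,m,n)$-independent constant $\prod_{i,j}(q/a_ib_j)_\infty/(a_ib_jq^{M_i+N_j})_\infty$ divided by $\prod_j q^{\binom{M_j}2+\binom{N_j}2}a_j^{M_j}b_j^{N_j}$. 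Since the spanning functions $g$ of the form \eqref{g} span $W$ (and likewise for $W'$), establishing the identity on these basis elements proves it for all $f\in W$ and $g\in W'$ by linearity. The main obstacle I anticipate is purely the exponent accounting in the displayed prefactor of the shifted master identity: verifying that the exponents $\binom{k_j}2+\binom{l_j}2+\binom{m_j}2+\binom{n_j}2$, the cross terms $m_jM_j+n_jN_j$, and the powers $(t^2q^{M_1+M_2+M_3})^T$ and $q^{2\binom T2}$ all distribute correctly between the two $(-t)^T q^{\binom T2}$-type factors demanded by \eqref{kds} for $\mathbf K$ and $\mathbf K'$. No genuinely new idea beyond that of Proposition~\ref{dpp} is required; the verification is the $\mathbf K$-level specialization of the same argument.
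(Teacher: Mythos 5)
Your proposal is correct and follows essentially the same route as the paper: the paper proves this proposition precisely by substituting $a_j\mapsto a_jq^{k_j}$, $b_j\mapsto b_jq^{l_j}$, $M_j\mapsto M_j+m_j-k_j$, $N_j\mapsto N_j+n_j-l_j$ (with all shift sums equal to $\pm T$) into \eqref{sp}, recognizing the resulting integrand factors as $f(q^{-x/2}z)\,g(q^{x/2}z)$ for spanning functions \eqref{g} of $W$ and $W'$, and matching the factored right-hand side against \eqref{kds} before concluding by linearity. The exponent bookkeeping you flag is indeed the only real work, and it goes through exactly as you anticipate.
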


We can now use the results of \S \ref{grss} to construct corresponding biorthogonal systems. This can be done in many ways, since we may for each of the spaces $W$ and  $W'$ work with the system $(q_n,r_n)$, $(\tilde q_n,\tilde r_n)$ or $(s_n,t_n)$. Moreover, 
permuting the parameters gives cases when both, one or none of the two functions chosen for $W$ have poles at the same geometric sequences as the functions chosen for $W'$.
In total, this gives ten essentially non-equivalent possibilities.
  We refrain from writing these down explicitly.


 \end{document}